\numberwithin{equation}{section}
\theoremstyle{plain}
\newtheorem{theorem}{Theorem}
\newtheorem{lemma}{Lemma}[section]
\newtheorem{propos}{Proposition}
\newtheorem{corollary}{Corollary}
\theoremstyle{definition} 
\newtheorem{definition}{Definition}
\newtheorem{remark}{Remark}
\newtheorem{example}{Example}
\begin{document}
\begin{flushright}  Dedicated to Victor Matveevich Buchstaber on the occasion of his 75th birthday

 \end{flushright}
\title{Geometry of central extensions of nilpotent Lie algebras}
\author[D.\,V.~Millionshchikov]{D.\,V.~Millionshchikov}
\address{Department of Mechanics and Mathematics, Moscow State University, 119992 Moscow, Russia}
\email{mitia\_m@hotmail.com}
%второй автор
\author[ R.~Jimenez]{R.~Jimenez}
\address{National Autonomous University of Mexico }
\email{landojb1960@gmail.com}

\date{21.01.2019}

\maketitle

\begin{abstract}
We obtain a recurrent and monotone method for constructing and classifying nilpotent Lie algebras by means of successive central extensions in this paper. It consists in calculating the second cohomology $H^2({\mathfrak g}, {\mathbb K})$ of an extendable nilpotent Lie algebra ${\mathfrak g}$ with the subsequent study of the orbit space geometry of the automorphism group ${\rm Aut}({\mathfrak g})$ action on Grassmannians of the form $Gr(m,H^2({\mathfrak g}, {\mathbb K}))$.  In this case, it is necessary to take into account the filtered cohomology structure with respect to the ideals of the lower central series: a cocycle defining a central extension must have maximum filtration. Such a geometric method allows us to classify nilpotent Lie algebras of small dimensions, as well as to classify narrow naturally graded Lie algebras. The concept of a rigid central extension is introduced. Examples of rigid and non-rigid central extensions are constructed.
\end{abstract}

%\keywords{central extension, automorphism, orbit of action, rigid Lie algebra, naturally graded Lie algebra}

\markright{Geometry of central extensions}

\footnotetext[0]{The study of the first author was suppored by RFBR grant N 16-51-55017, the second author was partially supported by organizations PASPA-DGAPA, UNAM and
CONACyT.}

\section*{Introduction}\label{s0}
An arbitrary nilpotent Lie algebra is a central extension of a nilpotent Lie algebra of lower dimension.
Question: Is it possible to organize a recurrent procedure using such a construction and classify finite-dimensional nilpotent Lie algebras?

The very first analysis of the posed question  shows its transcendental complexity, the answer to it is hardly accesible in a general setting and for an arbitrary dimension, but in small dimensions or for some special classes of nilpotent Lie algebras, answers can still be obtained.

We start the study with small dimensions. According to Morozov's well-known classification \cite{Mor}, in dimensions $\le 6$ there exists a finite number of pairwise non-isomorphic nilpotet Lie algebras over a field of characteristic zero. Starting with the dimension $7$ (where a one-parameter family of pairwise non-isomorphic nilpotent Lie algebras appears), the difficulties of classifying nilpotent Lie algebras are rapidly increasing, which leads, in particular, to the need to consider the so-called affine ${\mathcal L}_n$ variety of Lie algebra structures on a fixed $n$-dimensional vector space $V$ over the field  ${\mathbb K}$.
 The manifold ${\mathcal L}_n$ consists of skew-symmetric bilinear mappings
$\mu: V \wedge V \to V$ satisfying the Jacobi identity.
The affine variety ${\mathcal N}_n $ of nilpotent Lie algebras is also defined. There is a natural $ GL_n$-action on $ {\mathcal L}_n$ (respectively on $ {\mathcal N}_n$: 
$$
(g \cdot \mu)(x,y)=g\left( \mu(g^{-1}x, g^{-1}y)\right),   \; g \in GL_n, \; x,y \in V.
$$
Obviously, the isomorphism class of a given algebra (structure) of Lie $\mu \in {\mathcal L}_n $ corresponds to the orbit $O(\mu)$ of this action.

Here it is worth making a digression and say that the study of the variety ${\mathcal L}_n$ from the point of view of the orbit space of the action of the full linear group $GL_n$ has long attracted the attention of algebraists \cite{CaDu, KirNer, Ner, BuSt, FiPe1, Gorb2, Mnvl, FiPe}, as well as the study  of the variety ${\mathcal N}_n$  of nilpotent Lie algebras \cite{V, GrOH1, GJMKh, GoRemm}. This task is almost trivial for $n = 2$, but it is no longer such, starting with $n=3$ \cite{KirNer}, and for $n = 4$ the results of such studies look very, very complicated \cite{FiPe1, Mnvl}.  Note also that
the study of the orbits of actions of algebraic groups on affine varieties is a classic subject of algebraic geometry and the theory of invariants.
But such a theory is well developed for objects defined over an algebraically closed field, and we will be interested, first of all, in geometric applications for the field of real numbers. We also note that the issues considered in this article are directly related to the theory of deformations of Lie algebras (see \cite{Fial2}). 
Questions related to the applications of cohomological calculations to the explicit construction of formal deformations of a Lie algebra were considered in \cite{Fial3}. Note that the formal deformation technique developed in \cite{Fial3} is especially useful in a finite-dimensional situation.

A Lie algebra $\mu$ is called rigid if its orbit $O(\mu) $ is open. We will adhere to the principle of parallel consideration of the Euclidean topology of a finite-dimensional space together with the Zariski topology when studying the action orbits \cite{Gorb}.
It is this visual geometric approach that will be the basis of all our research.

The topic of this artcile was influences seriously by Vergne's conjecture of $1970$
{\it in the variety $N_n$ of nilpotent Lie algebra of dimension $n \ge 8$ there are no rigid Lie algebras}. The later and still open conjecture by Grunewald and O'Halloran that {\it each nilpotent Lie algebra of dimension at least two is a contraction (degeneration) of some other Lie algebra \cite{GrOH}} is logically
adjacent to this conjecture.
Recall that a Lie algebra $\mu'$ is called a contraction (degeneration) of $\mu$, if
$\mu' \in \bar O(\mu)$, where $\bar O(\mu)$ stands for the Zarissky closure of the orbit $O(\mu)$. Recently it was proved in \cite{HGrT} that the Gr\"unewald-O'Halloran conjecture is true for  $7$-dimensional nilpotent Lie algebras. In general, both conjectures are still open.

We briefly describe the structure of this work. In the section \ref{s1} we give all the necessary definitions and information concerning nilpotent Lie algebras, including finite-dimensional positively graded and filtered Lie algebras.
In the section \ref{s2} we give all the necessary information about the central extensions of an arbitrary Lie algebra ${\mathfrak g} $. General facts about the cohomology of finite-dimensional positively graded and filtered Lie algebras
are contained in \ref{s3}.
The key point in this article is the section \ref{s4}, which describes the recurrent and monotone method for constructing and classifying finite-dimensional nilpotent Lie algebras. The inductive step of our recurrent procedure is to construct a central extension $\tilde {\mathfrak g} $ of a nilpotent Lie algebra ${\mathfrak g} $, which has a nil-index $s$ one more than ${\mathfrak g}$ has and it has the given dimension of the $s$th ideal $\tilde {\mathfrak g}^s$ of the lower central series of the Lie algebra $\tilde{\mathfrak g}$.
Such an extension does not exist for every nilpotent Lie algebra. All information about extensions with the required properties is contained in the filtered structure.
the spaces $H^2 ({\mathfrak g}, {\mathbb K})$, whose filtration is induced by the natural filtration of the Lie algebra ${\mathfrak g} $ by the ideals of its lower central series.
To classify up to an isomorphism of $m$-dimensional extensions of the nilpotent Lie algebra $ {\mathfrak g}$, one must study the orbit space of the group action
$GL_m \times {\rm Aut}({\mathfrak g})$ on the second cohomology Grassmannian
$Gr(m, H^2({\mathfrak g}, {\mathbb K}))$. The open orbits of such an action will be called rigid cocycles (rigid sets of cocycles), and the corresponding central extensions will be called rigid central extensions in the section \ref{s5}. 
More accurately:
we say that the central extension $\mathfrak {g}_{\alpha}$ defined by the cocycle $ \alpha \in
H^2(\mathfrak {g}, {\mathbb K}^m)$ is rigid if an arbitrary central extension
$\mathfrak{g}_{\beta}$ defined by the cocycle $ \beta $ close to $ \alpha $ (in the sense of the Euclidean topology of the space $H^2(\mathfrak {g}, {\mathbb K}^m), {\mathbb K} = {\mathbb R}, {\mathbb C}$) is isomorphic to $\mathfrak{g}_{\alpha}$. On a nilpotent Lie algebra of dimension $\le 5$
there is always a one-dimensional rigid central extension. This is a simple consequence of Morozov’s classification \cite{Mor}.
Section \ref{s5} gives an example \ref{m_2} of a nilpotent algebra of dimension $6$ that does not admit any rigid one-dimensional central extension.
It should be noted that we do not endow the orbit space of the action $GL_m \times {\rm Aut} ({\mathfrak g}) $ on the Grassmannian $Gr(H^2({\mathfrak g}, {\mathbb K}), m)$ neither the structure of a topological space, nor, all the more, an algebraic set.

Section \ref{s6} is devoted to the classification of naturally graded Lie algebras, i.e. such nilpotent Lie algebras ${\mathfrak g} $ that are isomorphic to their associated graded Lie algebra ${\rm gr}{\mathfrak g} $ with respect to filtration by ideals of the lower central series.
As an example demonstrating the possibilities of our method of successive central extensions, we present a new proof of the Vergne theorem \cite{V}  on the classification of naturally graded filiform Lie algebras.

In the last section \ref{s7}, we construct an important example of a $6$-dimensional naturally graded Lie algebra $\tilde{\mathcal L}(2,4)$, which also has no rigid central extensions. But the main goal of the construction of this example is to show that dropping the "$3/2$ width" condition from \cite{Mill5} leads to the appearance of parametric families of pairwise nonisomorphic naturally graded Lie algebras already in dimension $8$. Of particular interest is the explicit visual description of the orbits of the action of the subgroup ${\rm Aut} _{gr}({\mathfrak g})$ graded automorphisms of the Lie algebra ${\mathfrak g} $ in the form of second-order surfaces in three-dimensional space.

\section{Nilpotent, positively graded and filtered Lie algebras}\label{s1}

A sequence of ideals of Lie algebra $\mathfrak{g}$
$$
\mathfrak{g}^1=\mathfrak{g} \; \supset \;
\mathfrak{g}^2=[\mathfrak{g},\mathfrak{g}] \; \supset \; \dots
\; \supset \;
\mathfrak{g}^k=[\mathfrak{g},\mathfrak{g}^{k-1}] \; \supset
\; \dots
$$
The sequence of ideals of a Lie algebra is called a decreasing (lower) central series of a Lie algebra $\mathfrak{g}$.

A Lie algebra $\mathfrak{g}$ is called nilpotent if
there is a natural number $s$ such that
$$
\mathfrak{g}^{s+1}=[\mathfrak{g},\mathfrak{g}^s]=0,
\quad \mathfrak{g}^s \: \ne 0.
$$
The number $s$ is called the nil-index of a nilpotent Lie algebra
$\mathfrak{g}$, 
and Lie algebra itself
$\mathfrak {g}$ is called a nilpotent Lie algebra of index $s$ or a nilpotent Lie algebra of degree $s$.

\begin{definition}
The Lie algebra ${\mathfrak g}=\oplus_{i=1}^{+\infty}{\mathfrak g}_i$, decomposed into a direct sum of its homogeneous subspaces ${\mathfrak g}_i, i \in {\mathbb N}, $ is called ${\mathbb N}$-graded (positively graded) if the following condition holds
$$
[{\mathfrak g}_i, {\mathfrak g}_j]\subset {\mathfrak g}_{i+j}, i,j \in {\mathbb N}.
$$
\end{definition}
A finite-dimensional ${\mathbb N}$-graded Lie algebra is nilpotent.

In the definitions of Lie algebras, we will omit the relations of the form
$[e_i, e_j]=0$.
\begin{example}
\label{m_0(n)}
Lie algebra $\mathfrak{m}_0(n-1)$, defined by its base
$ e_1, e_2, \dots, e_n$ with commutation relations
$$ 
[e_1,e_i]=e_{i+1}, \; 2\le i \le n{-}1,
$$
is nilpotent with nil-index $s(\mathfrak{m}_0(n-1))=n-1$. 
The Lie algebra $\mathfrak{m}_0(n-1)$ can be equipped with a grading
$\mathfrak{m}_0(n-1)=\oplus_{i=1}^{n}(\mathfrak{m}_0(n-1))_i$, where
all homogeneous subspaces $(\mathfrak{m}_0(n-1))_i=\langle e_i \rangle$ are one-dimensional for  $1 \le i \le n$.
\end{example}
\begin{propos}[\cite{V}]
Let $\mathfrak{g}$ be a $n$-dimensional nilpotent Lie algebra.
Then for its nil-index $s(\mathfrak {g}) $ the estimate 
 $s(\mathfrak{g}) \le n-1$ is true.
\end{propos}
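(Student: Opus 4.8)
The plan is to track the dimensions of the successive quotients of the lower central series and to show that their total, which equals $n$, forces $s$ to be at most $n-1$. Since $\mathfrak{g}$ is nilpotent of index $s$, the filtration reads $\mathfrak{g}=\mathfrak{g}^1\supseteq\mathfrak{g}^2\supseteq\cdots\supseteq\mathfrak{g}^s\supseteq\mathfrak{g}^{s+1}=0$, and because $\mathfrak{g}$ is finite-dimensional one has the telescoping identity $n=\sum_{i=1}^{s}\dim(\mathfrak{g}^i/\mathfrak{g}^{i+1})$. So everything reduces to bounding the number of nonzero quotients from below in terms of $n$.

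First I would record that every quotient $\mathfrak{g}^i/\mathfrak{g}^{i+1}$ with $1\le i\le s$ is nonzero, i.e. the inclusions are strict up to the index $s$. This is a standard consequence of nilpotency: if $\mathfrak{g}^i=\mathfrak{g}^{i+1}$ for some $i\le s$, then $\mathfrak{g}^{i+1}=[\mathfrak{g},\mathfrak{g}^i]=[\mathfrak{g},\mathfrak{g}^{i+1}]=\mathfrak{g}^{i+2}$, and an immediate induction gives $\mathfrak{g}^i=\mathfrak{g}^{i+1}=\cdots=\mathfrak{g}^{s+1}=0$, contradicting $\mathfrak{g}^s\ne 0$. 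Hence each of the $s$ quotients contributes at least $1$ to the sum, which already yields the weaker bound $s\le n$.

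The crux, and the one extra input needed to sharpen $s\le n$ into $s\le n-1$, is that the very first quotient $\mathfrak{g}/\mathfrak{g}^2$ has dimension at least $2$ as soon as $s\ge 2$. I would prove this by contradiction: if $\dim(\mathfrak{g}/\mathfrak{g}^2)=1$, pick $e_1$ with $\mathfrak{g}=\langle e_1\rangle+\mathfrak{g}^2$ and expand an arbitrary bracket $[\alpha e_1+w,\beta e_1+w']$ with $w,w'\in\mathfrak{g}^2$. Using $[e_1,e_1]=0$, $[\mathfrak{g},\mathfrak{g}^2]=\mathfrak{g}^3$ and $[\mathfrak{g}^2,\mathfrak{g}^2]\subseteq[\mathfrak{g},\mathfrak{g}^2]=\mathfrak{g}^3$, every such bracket lands in $\mathfrak{g}^3$, so $\mathfrak{g}^2=[\mathfrak{g},\mathfrak{g}]\subseteq\mathfrak{g}^3\subseteq\mathfrak{g}^2$ and thus $\mathfrak{g}^2=\mathfrak{g}^3$; by the strictness just established this forces $\mathfrak{g}^2=0$, i.e. $s=1$, a contradiction. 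This is the step I expect to be the main obstacle, since it is where the Lie-algebra structure (and not merely the chain of subspaces) really enters; the rest is linear bookkeeping.

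Putting the pieces together, for $s\ge 2$ the telescoping sum gives $n\ge\dim(\mathfrak{g}/\mathfrak{g}^2)+\sum_{i=2}^{s}\dim(\mathfrak{g}^i/\mathfrak{g}^{i+1})\ge 2+(s-1)=s+1$, hence $s\le n-1$. The remaining case $s=1$ is the abelian one, where the asserted inequality $1\le n-1$ holds for every $n\ge 2$; only the degenerate one-dimensional algebra need be set aside, matching the implicit hypothesis that $\mathfrak{g}$ is a genuinely noncommutative (or at least $\ge 2$-dimensional) object. As a sanity check, the algebra $\mathfrak{m}_0(n-1)$ of Example \ref{m_0(n)} realizes equality $s=n-1$, with first quotient of dimension exactly $2$ and all later quotients of dimension $1$, which confirms both that the bound is sharp and that the estimates above are tight.
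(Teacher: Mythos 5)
Your argument is correct. The paper itself offers no proof of this proposition --- it is stated with a citation to Vergne \cite{V} --- so there is nothing to compare against; your write-up supplies the standard argument that would be given. The two ingredients are exactly the right ones: strict descent of the lower central series up to index $s$ (giving $s\le n$), and the sharpening $\dim(\mathfrak g/\mathfrak g^2)\ge 2$ whenever $\mathfrak g$ is non-abelian, which you correctly identify as the only place where the Lie structure, rather than mere linear algebra, is used; your direct computation showing that a one-dimensional $\mathfrak g/\mathfrak g^2$ forces $\mathfrak g^2=\mathfrak g^3$ is sound (this is the familiar fact that a nilpotent Lie algebra is generated by any lift of a basis of $\mathfrak g/\mathfrak g^2$). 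Your observation that the one-dimensional abelian algebra is a genuine exception to the inequality as literally stated is also accurate and worth flagging, since the paper leaves that hypothesis implicit.
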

\begin{definition}
A nilpotent $n$-dimensional Lie algebra $\mathfrak{g}$ is called filiform if $s(\mathfrak{g})=n-1$.
\end{definition}

The Lie algebra ${\mathfrak m}_0 (n-1)$, considered above, is filiform. We give another example of a filiform Lie algebra.
\begin{example}
\label{m_1(2m-1)}
The Lie algebra $\mathfrak{m}_1(2m{-}1), m \ge 3$.  The basis $e_1,e_2,\dots,e_{2m{-}1},e_{2m}$ and structure relations
$$
\begin{array}{c}
[e_1,e_i]=e_{i+1}, i=2,\dots, 2m{-}2,\;
[e_{k},e_{2m-k+1}]=(-1)^{k+1}e_{2m}, k=2,\dots,m.
\end{array}
$$
\end{example}

\begin{definition}
A set of nested subspaces of a Lie algebra  ${\mathfrak g}$  
$$
F^1{\mathfrak g}={\mathfrak g} \supset F^2{\mathfrak g} \supset F^3{\mathfrak g}\supset \dots \supset F^m{\mathfrak g} \supset \dots
$$
is called positive filtration, if
$$
[F^i{\mathfrak g}, F^j{\mathfrak g}] \subset F^{i+j}{\mathfrak g}, \forall i,j \in {\mathbb N}.
$$
\end{definition}
An example of positive filtration is the filtration by the ideals ${\mathfrak g}^k$ of the lower central series of a nilpotent Lie algebra ${\mathfrak g}$.

\begin{propos}
A Lie algebra ${\mathfrak g}$ with a positive filtration $F^1{\mathfrak g} \supset \dots \supset F^m{\mathfrak g} \supset \{0\}$ of finite length $m$ is nilpotent.
\end{propos}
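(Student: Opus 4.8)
The plan is to compare the lower central series ${\mathfrak g}^k$ with the given filtration $F^k{\mathfrak g}$ and to show by a short induction that the former is contained in the latter term by term. First I would pin down the convention hidden in the phrase ``finite length $m$'': it means that the chain $F^1{\mathfrak g} \supset \dots \supset F^m{\mathfrak g}$ is followed by $\{0\}$, i.e. $F^{m+1}{\mathfrak g}=\{0\}$, and more generally $F^k{\mathfrak g}=\{0\}$ for all $k>m$. This is consistent with the defining bracket condition, which is assumed to hold for all indices $i,j \in {\mathbb N}$ and in particular yields $[F^1{\mathfrak g}, F^m{\mathfrak g}] \subset F^{m+1}{\mathfrak g} = \{0\}$.

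The core claim is the inclusion ${\mathfrak g}^k \subset F^k{\mathfrak g}$ for every $k \ge 1$, which I would establish by induction on $k$. The base case is immediate from the normalizations ${\mathfrak g}^1 = {\mathfrak g} = F^1{\mathfrak g}$. For the inductive step, assuming ${\mathfrak g}^k \subset F^k{\mathfrak g}$, I compute
$$
{\mathfrak g}^{k+1} = [{\mathfrak g}, {\mathfrak g}^k] \subset [F^1{\mathfrak g}, F^k{\mathfrak g}] \subset F^{k+1}{\mathfrak g},
$$
where the first inclusion uses the inductive hypothesis together with ${\mathfrak g} = F^1{\mathfrak g}$, and the second is exactly the filtration property $[F^i{\mathfrak g}, F^j{\mathfrak g}] \subset F^{i+j}{\mathfrak g}$ applied with $i=1$ and $j=k$.

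With this inclusion in hand the conclusion is immediate: setting $k=m+1$ gives ${\mathfrak g}^{m+1} \subset F^{m+1}{\mathfrak g} = \{0\}$, so the lower central series reaches zero and, by the definition of nilpotency, ${\mathfrak g}$ is nilpotent with nil-index at most $m$.

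I do not expect any serious obstacle here: the argument is a single induction whose inductive step is one line. The only point that genuinely requires care is the bookkeeping at the tail of the filtration, namely making precise that ``finite length $m$'' forces $F^{m+1}{\mathfrak g}=\{0\}$ so that the inductive inclusion closes at the correct index; once that convention is fixed, everything reduces to the defining bracket property of a positive filtration.
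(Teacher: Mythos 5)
Your proof is correct and follows essentially the same route as the paper: an induction establishing ${\mathfrak g}^k \subset F^k{\mathfrak g}$ via the filtration property $[F^1{\mathfrak g},F^k{\mathfrak g}]\subset F^{k+1}{\mathfrak g}$, then reading off nilpotency from $F^{m+1}{\mathfrak g}=\{0\}$. Your extra care about the convention at the tail of the filtration is a reasonable addition but does not change the argument.
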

\begin{proof}
It is obvious that
${\mathfrak g}^2=[{\mathfrak g},{\mathfrak g}]=[F^1{\mathfrak g}, F^1{\mathfrak g}] \subset F^2{\mathfrak g}$. 
Continuing by induction, we get the inclusion
${\mathfrak g}^k  \subset F^k{\mathfrak g}$ for all  $k \ge 2.$
\end{proof}

One can define for an arbitrary  ${\mathbb N}$-graded Lie algebra  ${\mathfrak g}=\oplus_{k=1}{\mathfrak g}_k$ th filtration related to its grading
$$
F^k{\mathfrak g}=\oplus_{i=k}^{+\infty}{\mathfrak g}_i, \; k \ge 1.
$$

For a filtered Lie algebra $({\mathfrak g},F)$, the associated graded Lie algebra is defined
${\rm gr}_F \,\mathfrak{g}=\oplus_{i=1}^{+\infty}  F^i\mathfrak{g} / F^{i+1}\mathfrak{g}$. Its Lie bracket is given by the formula
$$
 \left[x+F^{i+1}\mathfrak{g}, y+F^{j+1}\mathfrak{g}\right]=[x,y]+F^{i+j+1}\mathfrak{g}, x\in F^i\mathfrak{g}, y\in F^j\mathfrak{g}.
$$
\begin{definition}
A nilpotent Lie algebra $\mathfrak {g}$ is called a naturally graded (Carnot algebra) if it is isomorphic to its associated graded Lie algebra
${\rm gr} \, \mathfrak {g}$ with respect to filtration by ideals of the lower central series.
The grading $\mathfrak{g}=\oplus_{i=1}^{+\infty}{\mathfrak g}_i$ of a naturally graded Lie algebra $\mathfrak{g}$ is called a natural grading if there is a graded isomorphism
$$
\varphi: {\rm gr}\, \mathfrak{g} \to \mathfrak{g}, \; \varphi(({\rm gr} \,\mathfrak{g})_i)={\mathfrak g}_i, \; 
i \in {\mathbb N}.
$$
\end{definition}
In the sequel, by a naturally graded Lie algebra (Carnot algebra) ${\mathfrak g} = \oplus_{i=1}^{+\infty}{\mathfrak g}_i $, we will mean a naturally graded Lie algebra equipped with a natural grading.

The Lie algebra $\mathfrak{m}_0(n)$, considered above, is a naturally graded (Carnot algebra). However, its natural graduation differs from its graduation, which we considered at the very beginning. In particular, its very first homogeneous subspace
$\left({\rm gr} \, m_0(n)\right)_1$ is two-dimensional.
$$
\left( {\rm gr} \,m_0(n)\right)_1=\langle e_1, e_2\rangle, 
\left( {\rm gr}\, m_0(n)\right)_i=\langle e_{i{+}1}\rangle, i=2,\dots,n{-}1.
$$

From the properties of a decreasing (lower) central series, one can derive one very important property of natural grading  ${\mathfrak g}=\oplus_{i=1}^{+\infty}{\mathfrak g}_i$.

\begin{propos}
A positive grading  ${\mathfrak g}=\oplus_{i=1}^{+\infty}{\mathfrak g}_i$ of a Lie algebra ${\mathfrak g}$ is natural if and only if
$$
[{\mathfrak g}_1,{\mathfrak g}_i]={\mathfrak g}_{i{+}1}, i \ge 1.
$$
\end{propos}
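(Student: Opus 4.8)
The plan is to translate the condition of being a natural grading into a statement purely about the filtration $F^k\mathfrak{g}=\oplus_{i\ge k}\mathfrak{g}_i$ attached to the grading, and then to compare this filtration with the lower central series degree by degree. First I would record that, by the previous proposition applied to the positive filtration $F$, one always has the inclusions $\mathfrak{g}^k\subseteq F^k\mathfrak{g}$ for every $k\ge 1$. My claim is that the grading is natural if and only if all these inclusions are equalities, i.e.\ $\mathfrak{g}^k=F^k\mathfrak{g}$ for all $k$. In the direction ``natural $\Rightarrow$ equality'' I would count dimensions: a graded isomorphism $\varphi\colon\mathrm{gr}\,\mathfrak{g}\to\mathfrak{g}$ with $\varphi((\mathrm{gr}\,\mathfrak{g})_i)=\mathfrak{g}_i$ forces $\dim\mathfrak{g}_i=\dim(\mathfrak{g}^i/\mathfrak{g}^{i+1})$, hence $\dim F^k\mathfrak{g}=\sum_{i\ge k}\dim\mathfrak{g}_i=\sum_{i\ge k}\dim(\mathfrak{g}^i/\mathfrak{g}^{i+1})=\dim\mathfrak{g}^k$, and together with the inclusion this yields $\mathfrak{g}^k=F^k\mathfrak{g}$. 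Conversely, if $\mathfrak{g}^k=F^k\mathfrak{g}$ then the canonical projection identifies $\mathfrak{g}_k$ with $F^k\mathfrak{g}/F^{k+1}\mathfrak{g}=\mathfrak{g}^k/\mathfrak{g}^{k+1}$, and summing these identifications produces the required graded isomorphism (it respects brackets because the bracket on $\mathrm{gr}\,\mathfrak{g}$ is precisely the degree-preserving part of the bracket on $\mathfrak{g}$). So it suffices to prove that $\mathfrak{g}^k=F^k\mathfrak{g}$ for all $k$ is equivalent to $[\mathfrak{g}_1,\mathfrak{g}_i]=\mathfrak{g}_{i+1}$ for all $i\ge 1$.

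For the implication $\mathfrak{g}^k=F^k\mathfrak{g}\Rightarrow[\mathfrak{g}_1,\mathfrak{g}_i]=\mathfrak{g}_{i+1}$, I would use that $\mathfrak{g}^i=F^i\mathfrak{g}=\oplus_{k\ge i}\mathfrak{g}_k$ is a graded subspace, so that $\mathfrak{g}^{i+1}=[\mathfrak{g},\mathfrak{g}^i]=\sum_{j\ge 1,\,k\ge i}[\mathfrak{g}_j,\mathfrak{g}_k]$ is again graded. Reading off the homogeneous component of degree $i+1$, the only pair $(j,k)$ with $j\ge 1$, $k\ge i$ and $j+k=i+1$ is $(1,i)$, so that component equals $[\mathfrak{g}_1,\mathfrak{g}_i]$. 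On the other hand the degree-$(i+1)$ component of $\mathfrak{g}^{i+1}=F^{i+1}\mathfrak{g}$ is $\mathfrak{g}_{i+1}$, and comparing the two gives $[\mathfrak{g}_1,\mathfrak{g}_i]=\mathfrak{g}_{i+1}$.

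For the reverse implication I would argue by induction that $[\mathfrak{g}_1,\mathfrak{g}_i]=\mathfrak{g}_{i+1}$ forces $\mathfrak{g}_i\subseteq\mathfrak{g}^i$ for every $i$. The base case $\mathfrak{g}_1\subseteq\mathfrak{g}=\mathfrak{g}^1$ is trivial, and in the inductive step $\mathfrak{g}_{i+1}=[\mathfrak{g}_1,\mathfrak{g}_i]\subseteq[\mathfrak{g},\mathfrak{g}^i]=\mathfrak{g}^{i+1}$, using $\mathfrak{g}_i\subseteq\mathfrak{g}^i$. Hence $F^k\mathfrak{g}=\oplus_{i\ge k}\mathfrak{g}_i\subseteq\mathfrak{g}^k$, which combined with the always-valid inclusion $\mathfrak{g}^k\subseteq F^k\mathfrak{g}$ gives $\mathfrak{g}^k=F^k\mathfrak{g}$ for all $k$.

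The main obstacle I anticipate is not any single calculation but the bookkeeping at the definitional level: one must be careful that the forward direction of the last equivalence only extracts the lowest homogeneous component of the identity $\mathfrak{g}^{i+1}=[\mathfrak{g},\mathfrak{g}^i]$, and that this suffices exactly because the filtration equality $\mathfrak{g}^{i+1}=F^{i+1}\mathfrak{g}$ is assumed rather than derived. Equally, the translation of ``natural grading'' into the filtration equality $\mathfrak{g}^k=F^k\mathfrak{g}$ relies on finite-dimensionality (for the dimension count) and on checking that the degreewise identification is a Lie-algebra map, so these two points would need to be stated explicitly.
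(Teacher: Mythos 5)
Your proof is correct and complete. The paper itself states this proposition without any proof (it is introduced only with the remark that it ``can be derived from the properties of the lower central series''), so there is no argument in the source to compare yours against. Your route --- reducing ``natural grading'' to the filtration identity $\mathfrak{g}^k=F^k\mathfrak{g}$ via the always-valid inclusion $\mathfrak{g}^k\subseteq F^k\mathfrak{g}$ from the preceding proposition plus a dimension count, and then reading off the degree-$(i+1)$ homogeneous component of $\mathfrak{g}^{i+1}=[\mathfrak{g},\mathfrak{g}^i]$ --- is the standard and intended one, and you are right to flag explicitly that the dimension count uses finite-dimensionality and that the degreewise identification must be checked to be a Lie homomorphism.
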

In particular a naturally graded Lie algebra  ${\mathfrak g}=\oplus_{i=1}^{+\infty}{\mathfrak g}_i$ is generated by its first homogeneous component ${\mathfrak g}_1$.

Nilpotent Lie algebras of nil-index two $s({\mathfrak g})=2$ are also known as metabelian Lie algebras. The lower central series of metabelian Lie algebras is as simple as possible.
$$
\mathfrak{g}=\mathfrak{g}^1 \supset 
\mathfrak{g}^2=[\mathfrak{g},\mathfrak{g}] \supset 0.
$$
Obviously, an arbitrary metabelian Lie algebra is a naturally graded Lie (Carnot algebra). Indeed, choose a linear subspace ${\mathfrak g}_1$ in the Lie algebra $ {\mathfrak g} $ as an addition to the commutator $\mathfrak{g}_2=[\mathfrak{g},\mathfrak{g}]$ in the Lie algebra ${\mathfrak g}$: $\mathfrak{g}={\mathfrak g}_1\oplus \mathfrak{g}_2$. Obviously, with such a choice of subspaces ${\mathfrak g}_i, i = 1,2$,
$$
[{\mathfrak g}_1, {\mathfrak g}_1]=[\mathfrak{g},\mathfrak{g}]={\mathfrak g}_2.
$$
Classification of metabelian Lie algebras turned out to be extremely complicated \cite {Gaug, GalTim}: the maximum dimension, in which complex metabelian Lie algebras are currently classified, is $9$ \cite{GalTim}. 

But filiform naturally naturally graded Lie algebras are easy to classify \cite{V}. We will talk about such a classification in paragraph \ref{s6}.

\section{Central extensions of Lie algebras}\label{s2}

\begin{definition}
Central extension of Lie algebra
$\mathfrak{g}$ is called the exact sequence
\begin{equation}
\label{exactseq}
\begin{CD}0 @>>> V@>{i}>>\tilde {\mathfrak g} @>\pi>>{\mathfrak g}@>>>0
\end{CD}
\end{equation}
Lie algebras and their homomorphisms, in which the image of the homomorphism
$i: V \to \tilde {\mathfrak{g}}$ 
is contained in the center $Z(\tilde{\mathfrak{g}})$ of the Lie algebra $\tilde{\mathfrak{g}}$, and the linear subspace $V$ is considered as an abelian Lie algebra.
\end{definition}

\begin{example}
The Lie algebras $\mathfrak{m}_1(2m-1)$ and $\mathfrak{m}_0(2m-1)$   are one-dimensional central extensions of the Lie algebra 
$\mathfrak{m}_0(2m-2)$ for $m \ge 3$.
\end{example}

As a vector space, the central extension $\tilde {\mathfrak {g}}$ is a direct sum
$V \oplus \mathfrak {g} $ with standard inclusion $i$ and projection
 $\pi$.
The Lie bracket in the vector space $V \oplus \mathfrak {g}$ can be defined by the formula
$$
[(v,g), (w, h)]_{\tilde {\mathfrak g}}=(c(g,h), [g,h]_{{\mathfrak g}}), \; \; \; g, h \in {\mathfrak g},
$$
where $c$ 
is a skew-symmetric bilinear function on ${\mathfrak g}$, which takes its values in the space $=V$, and $[\cdot, \cdot]_{{\mathfrak g}}$ defines the Lie bracket of a Lie algebra ${\mathfrak g}$.
One can verify directly that the Jacobi identity for the bracket
$[\cdot, \cdot]_{\tilde {\mathfrak g}} $ is equivalent to the condition that the bilinear function is a cocycle, i.e. the following equality holds identically
$$
c([g,h]_{{\mathfrak g}},e)+c([h,e]_{{\mathfrak g}},g)+c([e,g]_{{\mathfrak g}},h)=0, \;\; \forall g,h,e \in {\mathfrak g},
$$
we assume that the initial bracket $[g,h]_{{\mathfrak g}}$ satisfies the Jacobi identity.  

Two extensions are called equivalent if there is an isomorphism of Lie algebras 
$f:  \tilde {\mathfrak g}_2 \to \tilde {\mathfrak g}_1$,
such that the following diagram is commutative
\begin{equation}
\label{equivalent}
\begin{CD}0 @>>> V@>{i_1}>>\tilde {\mathfrak g}_1 @>\pi_1>>{\mathfrak g}@>>>0\\
    @AAA @AA{Id}A @AA{f}A  @AA{Id}A @AAA\\
    0 @>>> V@>{i_2}>>\tilde {\mathfrak g}_2 @>\pi_2>>{\mathfrak g}@>>>0
\end{CD}
\end{equation}

A cocycle $c$ is called cohomologous to zero $c \sim 0$ if such a linear mapping exists  $\mu: {\mathfrak g} \to V$ such that $c(x,y)=\mu([x,y]_{\mathfrak g})$. In this situation, the cocycle $c$ is called a coboundary and is denoted by $c=d\mu$.  

Two cocycles are called cohomologous
$c \sim c '$ if their difference is cohomologous to zero $ c - c' \sim 0$. Cohomologous cocycles define equivalent central extensions. To prove this, it suffices to verify that the linear mapping
$$
f=Id + \mu: V \oplus \mathfrak{g} \to V \oplus \mathfrak{g}, \; f(v,g)=(v{+}\mu(g),g),
$$
is an isomorphism of Lie algebras in the diagram (\ref{equivalent}). The converse is also true \cite{Fu}. 

Note also that the cocycle $c '\sim 0$ cohomologous to zero defines an extension $ \tilde{\mathfrak g}' $ isomorphic to the direct sum of $ V \oplus {\mathfrak g} $ Lie algebras. Such a central extension is called trivial.

\begin{remark}
It may well happen that the Lie algebras $\tilde {\mathfrak g}_2$ and $\tilde{\mathfrak g}_1 $, corresponding to nonequivalent central extensions, are nevertheless isomorphic.The fact is that an isomorphism $ f $ from a commutative diagram (\ref{equivalent}) has to map $i_2(V) \subset \tilde {\mathfrak g}_2$ to $i_1(V) \subset \tilde {\mathfrak g}_1$ and induce identity mapping of
quotient algebras $Id: \tilde {\mathfrak g}_2/i_2(V) \to \tilde {\mathfrak g}_1/i_1(V)$. The absence of an isomorphism of $f$ with such additional properties does not mean the absence of isomorphism in general.  In the general case, an isomorphism is not required to translate $i_2(V)$ into $i_1(V)$

However, in the case of a nilpotent Lie algebra $ {\mathfrak g} $, the answer to the question of the isomorphism of its two different central extensions ${\mathfrak g}_2$ and $\tilde {\mathfrak g}_1$ is quite possible and constructive with some  $c_1$ and $c_2$,that we will show it the section \ref{s4}. 
\end{remark}

\section{Cohomology of positively graded and filtered Lie algebras}\label{s3}

Consider the standard cochain complex of an $n$-dimensional Lie algebra $ \mathfrak {g} $ with coefficients in a one-dimensional trivial module
 ${\mathbb K}$.
\begin{equation}
\label{cochain_complex}
\begin{CD}
\mathbb K @>{d_0{=}0}>> \mathfrak{g}^* @>{d_1}>> \Lambda^2 (\mathfrak{g}^*) @>{d_2}>>
\dots @>{d_{n-1}}>>\Lambda^{n} (\mathfrak{g}^*) @>>> 0
\end{CD}
\end{equation}
where the symbol $d_1: \mathfrak{g}^* \rightarrow \Lambda^2 (\mathfrak{g}^*)$
denotes the dual mapping to the Lie bracket
$[ \, , ]: \Lambda^2 \mathfrak{g} \to \mathfrak{g}$, 
with the differential $d$ (in fact, it is a set of mappings $d_p$)
is a derivation of exterior algebra 
 $\Lambda^*(\mathfrak{g}^*)$,
which continues $d_1$:
$$
d(\rho \wedge \eta)=d\rho \wedge \eta+(-1)^{deg\rho} \rho \wedge d\eta,
\; \forall \rho, \eta \in \Lambda^{*} (\mathfrak{g}^*).
$$
The relation $d^2=0$ is equivalent to the Jacobi identity in the Lie algebra $\mathfrak{g}$.

The cohomology of the complex $(\Lambda^*(\mathfrak{g}^*), d)$ is called
cohomology (with trivial coefficients) of a Lie algebra
$\mathfrak {g}$ and is denoted by  $H^*(\mathfrak{g}, {\mathbb K})$.

We do not define a cochain complex of the Lie algebra ${\mathfrak g}$ with values in arbitrary ${\mathfrak g}$-module $V$ referring the reader to \cite{Fu} for details, noting that if
 ${\mathfrak g}$-module $V$ is trivial ($gv=0, \forall g \in {\mathfrak g}, \forall v \in V$) and $\dim V=m$ then there are isomorphisms of ${\mathfrak g}$-modules and cohomology
$$
V \cong \underbrace{{\mathbb K}\oplus \dots \oplus {\mathbb K}}_{m},\;
H^q({\mathfrak g}, V)\cong \underbrace{H^q({\mathfrak g},{\mathbb K})\oplus \dots \oplus H^q({\mathfrak g},{\mathbb K})}_{m}=(H^q({\mathfrak g}, {\mathbb K}))^m.
$$

For any ${\mathbb N}$-graded Lie algebra  
$\mathfrak{g}=\oplus_{i=1}^{+\infty}\mathfrak{g}_i$ its exterior algebra $\Lambda^* \mathfrak{g}$
can be endowed with the second grading 
$\Lambda^* \mathfrak{g} =
\bigoplus_{i=1}^{+\infty} \Lambda^*_{i} \mathfrak{g}$, where
$\Lambda^p_{i} \mathfrak{g}$ is a linear span of monomials
$\xi_1 \wedge \xi_2 \wedge \dots \wedge\xi_p$ such that 
$$
\xi_1 \in \mathfrak{g}_{\alpha_1}, \xi_2 \in \mathfrak{g}_{\alpha_2},
\dots,  \xi_p \in \mathfrak{g}_{\alpha_p}, \;
\alpha_1{+}\alpha_2{+}\dots{+}\alpha_p=i.
$$
The space of skew-symmetric $p$-functions
$\Lambda^p (\mathfrak{g}^*)$  is also endowed with a second grading 
$\Lambda^p (\mathfrak{g}^*) =
\bigoplus_{\lambda} \Lambda^p_{(\lambda)} (\mathfrak{g}^*)$. 
The subspace
$\Lambda^p_{(\lambda)} (\mathfrak{g}^*), \lambda \in {\mathbb N},$ is defined as
$$ 
\Lambda^p_{(\lambda)} (\mathfrak{g}^*)= \left\{ \omega
\in \Lambda^p (\mathfrak{g}^*)
 \; | \; \omega(v)=0, \: \forall v \in
 \Lambda^p_{(\mu)} (\mathfrak{g}), \: \mu \ne \lambda \right\}.
$$
We will consider the cohomology of only finite-dimensional  ${\mathbb N}$-graded  Lie algebras, and therefore
the sign of the direct sum in the preceding formulas denotes the usual finite direct sum of subspaces. 

The second grading is compatible with the differential
$d$ and with the exterior product
$$
d \Lambda^p_{(\lambda)} (\mathfrak{g}^*)
\subset \Lambda^{p{+}1}_{(\lambda)} (\mathfrak{g}^*), \qquad
\Lambda^{p}_{(\lambda)} (\mathfrak{g}^*) \cdot
\Lambda^{q}_{(\mu)} (\mathfrak{g}^*) \subset
\Lambda^{p{+}q}_{(\lambda{+}\mu)} (\mathfrak{g}^*)
$$
Homogeneous forms from the subspace $\Lambda^p_{(\lambda)} (\mathfrak{g}^*)$ we will call $p$-forms of weight  $\lambda$ in the sequel, respectively closed  $p$-forms of weight $\lambda$ will be called $p$-cocycles of weight $\lambda$, a similar rule would be for coboundaries.

External product in $\Lambda^*(\mathfrak {g}^*)$
induces the structure of bigraded algebra in cohomology
 $H^*(\mathfrak{g})$
$$
H^{p}_{(\lambda)} (\mathfrak{g}) \otimes
H^{q}_{(\mu)} (\mathfrak{g}) \to
H^{p{+}q}_{(\lambda{+}\mu)} (\mathfrak{g}).
$$

\begin{example}
\label{cohomol_m_0_n}
The cochain complex $(\Lambda^*({\mathfrak m}_0(n)), d)$ is generated by $a^1,b^1,a^2,\dots, a^n$ with the differential defined by
$$
da^1=db^1=0, da^2=a^1\wedge b^1, da^{i}=a^1\wedge a^{i-1}, 3 \le i \le n.
$$
Its cohomology (i.e. the cohomology $H^*({\mathfrak m}_0(n),{\mathbb K})$ of the Lie algebra ${\mathfrak m}_0(n)$) is long known. In particular, dimension of the space  
$H^2({\mathfrak m}_0(n),{\mathbb K})$ is $k=\left[ \frac{n}{2}\right]$ and is the linear span of the following system of homogeneous $2$-cycles of odd weights $3,5,\dots,2k+1$:
\begin{equation}
\label{H^2_cohomol_m_0}
c_{2q+1}=-b^1\wedge a^{2q}+\sum_{i=2}^{q}(-1)^ia^i\wedge a^{2q+1-i}, \; q=1,\dots,k.
\end{equation}
\end{example}

We now consider a positively filtered Lie algebra ${\mathfrak g}$ with a filtration of length $m$
$$
F^1{\mathfrak g}={\mathfrak g}\supset F^2{\mathfrak g} \supset \dots \supset 
F^m{\mathfrak g} \supset \{0\}
$$
We define by means of it the increasing filtration  $\tilde F$ of the  cochain complex $(\Lambda^*({\mathfrak g}^*), d)$ of the Lie algebra
${\mathfrak g}$ with coefficients in a trivial one-dimensional module ${\mathbb K}$. For the natural numbers $p, k$, we introduce a linear subspace 
\begin{equation}
\label{L*g*filtr}
\tilde F^k\Lambda^p({\mathfrak g})=\left\{\omega \in \Lambda^p({\mathfrak g}) | \omega(\xi_1,\dots,\xi_p)=0, \xi_i {\in} F^{k_i}{\mathfrak g},
k_1+\dots +k_p>k \right\}
\end{equation}
in the space of all skew-symmetric $p$-forms $\Lambda^p({\mathfrak g})$.

It is easy to check the invariance of the filtering $\tilde F$ with respect to the differential $d$ and the nesting relation of the subspaces
$$
 \tilde F^0 \Lambda^p (\mathfrak{g}^*)=0,  \tilde F^k \Lambda^p (\mathfrak{g}^*) \subset  \tilde F^{k+1} \Lambda^p (\mathfrak{g}^*),\;
d \tilde F^k \Lambda^p (\mathfrak{g}^*)
\subset\tilde F^k \Lambda^{p{+}1}(\mathfrak{g}^*).
$$
\begin{definition}
We will say that the $p$ -form $\omega \in \Lambda^p({\mathfrak g})$ has filtration $k$ and write $\phi(\omega)=k$, if 
$$
\omega \in F^k \Lambda^p (\mathfrak{g}^*), \; \omega \notin 
F^{k-1} \Lambda^{p}(\mathfrak{g}^*).
$$
\end{definition}
In the subsequent sections, we also need one increasing filtration of the dual space ${\mathfrak g}^*$ to the nilpotent Lie algebra ${\mathfrak g} $. This filtration is constructed recurrently, but, as we will see, is connected with the filtration of the original Lie algebra ${\mathfrak g} $ with ideals of the lower central series.

Define a chain of embedded in each other subspaces in ${\mathfrak g}^*$
$$
L_0=0 \subset L_1 \subset L_2 \subset \dots \subset L_i \subset \dots 
$$
where the subspace $L_i$ is defined by the following
\begin{equation}
\label{g*filtr}
L_i=\{ \rho \in{\mathfrak g}^*: d \rho \in \Lambda^2 (L_{i-1})\},\; i\ge1.
\end{equation} 

Obviously, the subspace
$L_1= \ker d$ coincides with the subspace of closed $1$-forms of a cochain complex (\ref {cochain_complex}) of a Lie algebra  ${\mathfrak g}$.

\begin{propos}
\label{propos_L_i}
The subspace $L_i \subset {\mathfrak g}^*$ is the annihilator of the ideal ${\mathfrak g}^{i+1}$ for $0 \le i \le s$. 
\end{propos}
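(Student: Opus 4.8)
The plan is to argue by induction on $i$, using the recursive definition (\ref{g*filtr}) of $L_i$ together with the defining property $\mathfrak{g}^{i+1}=[\mathfrak{g},\mathfrak{g}^i]$ of the lower central series. The base case $i=0$ is immediate: $L_0=0$ by definition, while $\mathfrak{g}^1=\mathfrak{g}$ has annihilator $\{0\}$ in $\mathfrak{g}^*$. I would then assume the statement for $i-1$, namely $L_{i-1}=\mathrm{Ann}(\mathfrak{g}^i)$, and deduce $L_i=\mathrm{Ann}(\mathfrak{g}^{i+1})$; both inclusions will come at once from an ``if and only if'' characterization.

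The crucial preliminary is an explicit description of the subspace $\Lambda^2(L_{i-1})\subset\Lambda^2(\mathfrak{g}^*)$ in which the defining condition $d\rho\in\Lambda^2(L_{i-1})$ lives. I would establish the linear-algebra fact that, for any subspace $U\subset\mathfrak{g}$,
$$
\Lambda^2\bigl(\mathrm{Ann}(U)\bigr)=\bigl\{\omega\in\Lambda^2(\mathfrak{g}^*)\;:\;\iota_u\omega=0\ \text{ for all } u\in U\bigr\},
$$
i.e.\ the exterior square of the annihilator of $U$ consists exactly of the $2$-forms killed by contraction with $U$. The forward inclusion is clear, since $\iota_u(\alpha\wedge\beta)=\alpha(u)\beta-\beta(u)\alpha$ vanishes when $\alpha,\beta\in\mathrm{Ann}(U)$; for the reverse inclusion I would pick a basis $e_1,\dots,e_n$ adapted to $U=\langle e_{k+1},\dots,e_n\rangle$, so that $\mathrm{Ann}(U)=\langle e^1,\dots,e^k\rangle$, and verify that $\iota_{e_j}\omega=0$ for every $j>k$ forces all coefficients of $\omega$ carrying an index $>k$ to vanish. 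Specializing to $U=\mathfrak{g}^i$ and using $L_{i-1}=\mathrm{Ann}(\mathfrak{g}^i)$ identifies $\Lambda^2(L_{i-1})$ with the $2$-forms $\omega$ such that $\omega(x,y)=0$ whenever $y\in\mathfrak{g}^i$.

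The inductive step is then a short computation. Since on a $1$-form the differential is $d\rho(x,y)=-\rho([x,y])$, the condition $\rho\in L_i$, i.e.\ $d\rho\in\Lambda^2(L_{i-1})$, becomes $\rho([x,y])=0$ for all $x\in\mathfrak{g}$ and all $y\in\mathfrak{g}^i$. As the brackets $[x,y]$ with $x\in\mathfrak{g},\,y\in\mathfrak{g}^i$ span $[\mathfrak{g},\mathfrak{g}^i]=\mathfrak{g}^{i+1}$, this is equivalent to $\rho(\mathfrak{g}^{i+1})=0$, that is $\rho\in\mathrm{Ann}(\mathfrak{g}^{i+1})$, which closes the induction; at the top end $i=s$ one recovers $L_s=\mathrm{Ann}(\mathfrak{g}^{s+1})=\mathrm{Ann}(0)=\mathfrak{g}^*$, consistent with the stabilization of the chain. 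I expect the one genuinely non-formal step to be the exterior-square lemma of the previous paragraph: one must take care that membership in $\Lambda^2(\mathrm{Ann}(U))$ is equivalent to the contraction condition against all of $U$, and not merely to vanishing on a complementary subspace, and the adapted-basis argument (equivalently, working with the operators $\iota_u$) is exactly where this is pinned down. Everything else reduces to the identity $\mathfrak{g}^{i+1}=[\mathfrak{g},\mathfrak{g}^i]$ and the formula for $d$ on $1$-forms.
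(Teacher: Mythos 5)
Your proposal is correct and follows essentially the same route as the paper: induction on $i$ via the identity $d\rho(\xi,\eta)=-\rho([\xi,\eta])$ and the fact that $\mathfrak{g}^{i+1}=[\mathfrak{g},\mathfrak{g}^i]$. The only difference is that you isolate and prove the characterization of $\Lambda^2(\mathrm{Ann}(U))$ by contraction as a separate lemma, whereas the paper simply asserts that $d\rho\in\Lambda^2(L_{i-1})$ if and only if $d\rho$ is annihilated by $\mathfrak{g}\wedge\mathfrak{g}^i$ --- a welcome but not substantively different addition.
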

\begin{proof}
We will prove the statement by induction on $i$. The basis of induction is obvious: $L_0=0$ is the annihilator of the whole algebra ${\mathfrak g}={\mathfrak g}^1$. Suppose, according to the inductive hypothesis, the subspace $L_{i-1}$ is the annihilator of the ideal ${\mathfrak g}^{i}$. 
Then $d\rho \in \Lambda^2(L_{i-1})$ if and only if $d\rho$ is annihilated by the subspace
${\mathfrak g}\wedge{\mathfrak g}^{i}$ or in other words $d\rho( \xi,\eta)=-\rho([\xi,\eta])=0$ for all  $\xi \in{\mathfrak g}$ and
$\eta \in{\mathfrak g}^{i}$. Thus $d\rho \in \Lambda^2(L_{i-1})$ if and only if  the linear function $\rho \in {\mathfrak g}^*$ belongs to the  annihilator of the  ${\mathfrak g}^{i+1}$.
\end{proof}
From the proved proposition it follows that the chain length $\{L_i\} $ is finite. Its length is equal to the nilpotency index $s$: $L_s={\mathfrak g}^*$. 

\section{A recurrent method for constructing nilpotent Lie algebras}\label{s4}

Consider a (non-Abelian) nilpotent Lie algebra
$\tilde{\mathfrak g}$. It has a non-trivial cente $Z(\tilde {\mathfrak g})$ 
(the last nontrivial ideal ${\mathfrak g}^s$ of its lower central series) belongs to the center $Z(\tilde {\mathfrak g})$).
Consider the quotient Lie algebra ${\mathfrak g}=\tilde{\mathfrak{g}}/ Z(\tilde {\mathfrak g})$ and its corresponding central extension
$$
0 \to Z(\tilde {\mathfrak g}) \to  \tilde{\mathfrak g}  \to {\mathfrak g} \to 0. 
$$
This central extension is defined by some cocycle $\tilde c$  from $H^2({\mathfrak g}, Z(\tilde {\mathfrak g}))$. 

Recall also that, according to the Dixmier theorem \cite{Dixm}, all the cohomology groups $H^i({\mathfrak g},V)$ of an arbitrary finite-dimensional nilpotent Lie algebra ${\mathfrak g}$ with coefficients in any trivial ${\mathfrak g}$-module $V$ are non-trivial.

Fix a basis $e_1,\dots,e_m$ of the ideal $Z(\tilde {\mathfrak g})$ where   $\dim Z(\tilde {\mathfrak g})=m$.This will give the opportunity to write the cocycle $\tilde c $ in the corresponding coordinates
$
\tilde c=(\tilde c_1,\dots,\tilde c_m).
$
The component $\tilde c_l$  of the cocycle $\tilde c$ has a simple meaning, it is the differential of a linear functional  $e^l$ from the basis that is dual to the basis $e_1,\dots,e_m$ of the ideal $Z(\tilde {\mathfrak g})$ in the dual space $Z(\tilde {\mathfrak g})^*$
$$
de^l=\tilde c_l, \; l=1,\dots,m.
$$
On the other hand, fixing a basis of the ideal $Z(\tilde {\mathfrak g})$  is nothing but the presentation of explicit isomorphism of trivial ${\mathfrak g}$-modules
$Z(\tilde {\mathfrak g}) \to  \underbrace{{\mathbb K}\oplus \dots \oplus{\mathbb K}}_{m}$.

Further, taking into account the convenience of specific calculations, we will define a cocycle $\tilde c$ from $H^2({\mathfrak g}, Z(\tilde {\mathfrak g}))$ by means of its image under the isomorphism $H^2({\mathfrak g}, Z(\tilde {\mathfrak g})) \to H^2({\mathfrak g}, {\mathbb K}^m)$ i.e. by the set $(\tilde c_1,\dots, \tilde c_m)$ of cocycles from $H^2({\mathfrak g},{\mathbb K})$.

Consider two central extensions $\tilde {\mathfrak g}$ and $\tilde {\mathfrak g}'$ of the same nilpotent Lie algebra${\mathfrak g}$ using the same vector space $V$ such that $Z(\tilde {\mathfrak g}) \cong Z(\tilde {\mathfrak g}') \cong V$. 
Let there also be an isomorphism $f: \tilde {\mathfrak g}' \to \tilde {\mathfrak g}$. 
The equality holds
$
f\left(Z( \tilde {\mathfrak g}')\right)=Z\left(\tilde {\mathfrak g}\right)
$
and we have a commutative diagramm
\begin{equation}
\label{isom}
\begin{CD}0 @>>> Z\left(\tilde {\mathfrak g}\right)@>{i_1}>>\tilde {\mathfrak g} @>\pi_1>>{\mathfrak g}@>>>0\\
    @AAA @AA{\Psi}A @AA{f}A  @AA{\Phi}A @AAA\\
    0 @>>> Z( \tilde {\mathfrak g}')@>{i_2}>>\tilde {\mathfrak g}' @>\pi_2>>{\mathfrak g}@>>>0,
\end{CD}
\end{equation}
where by $\Psi$ we denote the isomorphism of vector spaces $Z\left(\tilde {\mathfrak g}'\right)$ and $Z\left(\tilde {\mathfrak g}\right)$. The symbol $\Phi$ denotes the automorphism of the Lie algebra ${\mathfrak g}$, induced by the isomorphism $f$.
\begin{example}
\label{collision}
Let ${\mathfrak g}={\mathfrak m}_0(2)\oplus {\mathbb K}$ be the direct sum of Lie algebras.  Consider as 
$\tilde {\mathfrak g}=\tilde {\mathfrak g}'={\mathfrak g}\oplus{\mathbb K}$  its one-dimensional trivial central extension. From the formal point of view, the subspace $i_1({\mathbb K}) = i_2({\mathbb K})$ does not have to be invariant with respect to an arbitrary automorphism  $f: \tilde {\mathfrak g}' \to  \tilde {\mathfrak g}$. 
\end{example}
Despite the deliberate artificiality of the above example, we must state that when the images $i_1(V)$ and $i_2(V)$ do not coincide with the centers
$Z\left(\tilde {\mathfrak g}'\right)$, $Z\left(\tilde {\mathfrak g}\right)$
of constructed Lie algebras $\tilde {\mathfrak g}'$ and $\tilde {\mathfrak g}$, we cannot guarantee the compatibility of an arbitrary isomorphism of $f$ with embeddings $i_1$ and $i_2$. However, the following lemma is true
\begin{lemma}
\label{main_propos}
Let $\{ \tilde c_1,{\dots}, \tilde c_m \}$ and $\{ \tilde c_1',{\dots}, \tilde c_m'\}$ be two sets of cocycles from $Z^2({\mathfrak g}, {\mathbb K})$. 
There is an isomorphism $f:\tilde {\mathfrak g}'  \to \tilde {\mathfrak g}$ of the corresponding central extensions such that  $f(i_2({\mathbb K}^m))=i_1({\mathbb K}^m)$ if and only if there are: 
a) non-degenerate number matrix  $A_{\Psi}$ and b) an automorphism $\Phi$ of the algebra
${\mathfrak g}$ such that equality holds
\begin{equation}
\label{ext_isom}
(\tilde c_1,\tilde c_2,\dots,\tilde c_m)A_{\Psi}=
(\Phi^*(\tilde c_1'),\Phi^*(\tilde c_2'),\dots,\Phi^*(\tilde c_m')).
\end{equation}
In the above formula, we denoted by the symbol $\Phi^*$ the action of the automorphism $\Phi$ on the space of two-dimensional cocycles $Z^2({\mathfrak g}, {\mathbb K})$.   
\end{lemma}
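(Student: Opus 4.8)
The plan is to compute directly with the bracket formula for central extensions from Section \ref{s2} and read off the constraints an isomorphism imposes on the defining cocycles. Write $V=\mathbb{K}^m$ and let $c=(\tilde c_1,\dots,\tilde c_m)$ and $c'=(\tilde c_1',\dots,\tilde c_m')$ be the $V$-valued cocycles on $\mathfrak g$ defining $\tilde{\mathfrak g}$ and $\tilde{\mathfrak g}'$, so that $[(v,g),(w,h)]_{\tilde{\mathfrak g}}=(c(g,h),[g,h]_{\mathfrak g})$ and likewise for $\tilde{\mathfrak g}'$ with $c'$. The first step is to pin down the shape of an admissible $f$. If $f\colon\tilde{\mathfrak g}'\to\tilde{\mathfrak g}$ is an isomorphism with $f(i_2(V))=i_1(V)$, then $f$ restricts to a linear isomorphism $\Psi:=f|_V\colon V\to V$ (with matrix $A_\Psi$) and, since $V$ lies in the kernel of both projections, descends to an automorphism $\Phi$ of $\mathfrak g\cong\tilde{\mathfrak g}'/V\cong\tilde{\mathfrak g}/V$, exactly as in diagram (\ref{isom}). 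Decomposing $(v,g)=(v,0)+(0,g)$ and using $\pi_1\circ f=\Phi\circ\pi_2$, one sees that every such $f$ has the block-triangular form
\begin{equation*}
f(v,g)=(\Psi(v)+\lambda(g),\Phi(g)),
\end{equation*}
for a uniquely determined linear map $\lambda\colon\mathfrak g\to V$, namely the $V$-component of $g\mapsto f(0,g)$.

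Next I would impose that $f$ be a homomorphism by comparing $f\bigl([(v_1,g_1),(v_2,g_2)]_{\tilde{\mathfrak g}'}\bigr)$ with $[f(v_1,g_1),f(v_2,g_2)]_{\tilde{\mathfrak g}}$. This splits into two components. The $\mathfrak g$-component yields $\Phi([g_1,g_2]_{\mathfrak g})=[\Phi(g_1),\Phi(g_2)]_{\mathfrak g}$, i.e.\ precisely that $\Phi$ is an automorphism; the $V$-component yields
\begin{equation*}
c\bigl(\Phi(g_1),\Phi(g_2)\bigr)-\Psi\bigl(c'(g_1,g_2)\bigr)=\lambda\bigl([g_1,g_2]_{\mathfrak g}\bigr).
\end{equation*}
Since $\lambda([g_1,g_2]_{\mathfrak g})=(d\lambda)(g_1,g_2)$ is a coboundary in the sense of Section \ref{s2}, this says exactly that the cocycles $\Phi^*c$ (with $(\Phi^*c)(x,y)=c(\Phi x,\Phi y)$) and $\Psi\circ c'$ are cohomologous. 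Expanded in the basis $e_1,\dots,e_m$ of $V$ this is the componentwise identity of (\ref{ext_isom}), up to the replacement of $A_\Psi,\Phi$ by $(A_\Psi^{\mathsf T})^{-1},\Phi^{-1}$ that results from transposing and inverting the linear relation — harmless here, since we only assert existence of \emph{some} nondegenerate matrix and \emph{some} automorphism. This establishes the forward implication.

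For the converse I would run the same computation backwards. Given a nondegenerate $A_\Psi$ (hence $\Psi$) and an automorphism $\Phi$ satisfying (\ref{ext_isom}), so that $\Phi^*c=\Psi\circ c'+d\lambda$ for some linear $\lambda\colon\mathfrak g\to V$, I define $f$ by the block-triangular formula above. The two component computations, read in reverse, show that $f$ is a Lie algebra homomorphism; being block-triangular with the invertible blocks $\Psi$ and $\Phi$ on the diagonal, $f$ is bijective, hence an isomorphism, and by construction it sends $i_2(V)=V\oplus 0$ onto $i_1(V)=V\oplus 0$.

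The one genuinely delicate point — and the step I would be most careful about — is the coboundary term $d\lambda$: the homomorphism condition produces (\ref{ext_isom}) only modulo $B^2(\mathfrak g,\mathbb K)$, and the freedom in the lift $\lambda$ is exactly what absorbs it, so (\ref{ext_isom}) is naturally read as an identity of cohomology classes (by Section \ref{s2}, adjusting each $\tilde c_i$ within its class only replaces $\tilde{\mathfrak g}$ by an equivalent extension). Everything else is routine bilinear bookkeeping, together with the observation that the hypothesis $f(i_2(V))=i_1(V)$ — which fails for a general isomorphism, as the Remark and Example \ref{collision} emphasize — is precisely what forces the clean form of $f$ and makes $\Psi$ and $\Phi$ well defined.
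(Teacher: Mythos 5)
Your proof is correct, but it travels the ``primal'' road where the paper travels the ``dual'' one. The paper never touches the bracket on $V\oplus\mathfrak g$ directly: it picks dual bases $e^1,\dots,e^m$ and ${e^1}',\dots,{e^m}'$ of $Z(\tilde{\mathfrak g})^*$ and $Z(\tilde{\mathfrak g}')^*$, uses that $de^i=\tilde c_i$ and $d{e^i}'=\tilde c_i'$ inside the cochain complexes of the extensions, writes the transition matrix $A_\Psi$ between $(e^i)$ and $(f^*({e^i}'))$, and obtains (\ref{ext_isom}) by applying $d$ to that basis relation and using $df^*=f^*d$. You instead put $f$ in block-triangular form $f(v,g)=(\Psi(v)+\lambda(g),\Phi(g))$ and equate the $V$-components of the two sides of the homomorphism condition; the two computations are exactly dual to one another ($\lambda^*({e^i}')$ is the $\pi_1^*\mathfrak g^*$-component of $f^*({e^i}')$ that the paper's transition-matrix bookkeeping suppresses). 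What your route buys is that it makes the one delicate point explicit: the homomorphism condition only yields $\Phi^*c=\Psi\circ c'+d\lambda$, i.e.\ (\ref{ext_isom}) modulo coboundaries, and it is the freedom in $\lambda$ (equivalently, Proposition \ref{ext_cohom}) that absorbs the discrepancy. The paper's proof silently identifies $f^*({e^i}')$ with an element of the span of $e^1,\dots,e^m$, which amounts to setting $\lambda=0$; your version records why that normalization is harmless. What the paper's route buys is brevity and the form in which the lemma is actually used later, namely as a statement about the induced action of $({\rm Aut}(\mathfrak g),GL_m)$ on (classes of) cocycles, feeding directly into Theorem \ref{main2}. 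Your observation that the forward direction produces $((A_\Psi^{\mathsf T})^{-1},\Phi^{-1})$ rather than $(A_\Psi,\Phi)$ is fine, since the statement only asserts existence and both $GL_m$ and ${\rm Aut}(\mathfrak g)$ are groups.
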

\begin{proof}
Choose bases ${e^1}',\dots,{e^m}'$ and $e^1,\dots,e^m$ in the dual spaces
$Z(\tilde {\mathfrak g}')^*$ and $Z(\tilde {\mathfrak g})^*$ respectively.
A Lie algebras isomorphism $f: \tilde {\mathfrak g}' \to \tilde {\mathfrak g}$ induces the isomorphism of $d$-algebras $f^*: ({\tilde {\mathfrak g}}^*, d) \to (({\tilde {\mathfrak g}}')^*, d)$ and in particular the linear spaces isomorphism $\Psi: Z(\tilde {\mathfrak g}) \to Z(\tilde {\mathfrak g}')$. Write in line
$(\Psi({e^1}'),\dots,\Psi({e^m}'))$  images of basis vectors from $Z({\tilde {\mathfrak g}}')^*$  in  $Z(\tilde {\mathfrak g})^*$. Introduce the transition matrix $A_{\Psi}$ from the basis
$e^1,e^2,\dots,e^m$ to the basis $\Psi({e^1}'),\Psi({e^2}'),\dots,\Psi({e^m}')$
\begin{equation}
(e^1,e^2,\dots,e^m)A_{\Psi}=
(\Psi({e^1}'),\Psi({e^2}'),\dots,\Psi({e^m}')).
\end{equation}
Applying the differential $ d $ to both sides of this equality and tacking into account
$$
d{e^i}=\tilde c_i, \; d\Psi({e^i}')=df^*({e^i}')=f^*d{e^i}'=\Phi^*d{e^i}'=\Phi^*(\tilde c_i'), i=1,\dots, m
$$ 
we get the required equality (\ref{ext_isom}).

In the other direction, the proposition is proved by repeating the above reasoning.
\end{proof}
\begin{propos}
\label{ext_cohom}
Let $\{ \tilde c_1,{\dots}, \tilde c_m \}$ and $\{ \tilde c_1',{\dots}, \tilde c_m'\}$ be two sets of cocycles from $Z^2({\mathfrak g}, {\mathbb K})$ such that
$$
\{ \tilde c_1',{\dots}, \tilde c_m'\}=\{ \tilde c_1+d\mu_1,{\dots}, \tilde c_m+d\mu_m\}.
$$ 
Then the corresponding central extensions are isomorphic  $\tilde {\mathfrak g}'\cong \tilde {\mathfrak g}$.
\end{propos}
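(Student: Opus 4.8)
The plan is to deduce the statement from the $\mathbb{K}^m$-valued instance of the equivalence criterion already established in Section~\ref{s2} (cohomologous cocycles define equivalent central extensions), rather than from Lemma~\ref{main_propos}. The two results are in fact complementary: Lemma~\ref{main_propos} records an equality at the level of $Z^2(\mathfrak g,\mathbb K)$ modulated only by a constant matrix $A_\Psi$ and an automorphism $\Phi$, and such data cannot absorb a coboundary in the $\mathfrak g^*$-directions; the coboundary freedom treated in Proposition~\ref{ext_cohom} is thus exactly the ingredient that the lemma does not see, and it is cleanest to handle it directly.

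First I would repackage the data. An $m$-tuple $\{\tilde c_1,\dots,\tilde c_m\}$ of cocycles in $Z^2(\mathfrak g,\mathbb K)$ is the same as a single cocycle $\tilde c=(\tilde c_1,\dots,\tilde c_m)\in Z^2(\mathfrak g,\mathbb K^m)$ with values in the trivial module $V=\mathbb K^m$, and the functionals $\mu_1,\dots,\mu_m\colon\mathfrak g\to\mathbb K$ assemble into one linear map $\mu=(\mu_1,\dots,\mu_m)\colon\mathfrak g\to\mathbb K^m$; the hypothesis $\tilde c_i'=\tilde c_i+d\mu_i$ for all $i$ becomes the single relation $\tilde c'=\tilde c+d\mu$ in $Z^2(\mathfrak g,\mathbb K^m)$, so $\tilde c$ and $\tilde c'$ are cohomologous $\mathbb K^m$-valued cocycles. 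With this reformulation I would exhibit the block-triangular linear map $f\colon \mathbb K^m\oplus\mathfrak g\to\mathbb K^m\oplus\mathfrak g$, $f(v,g)=(v-\mu(g),g)$ (the sign dictated by the coboundary convention $d\mu(g,h)=\mu([g,h])$ of Section~\ref{s2}), and verify that it intertwines the two brackets: expanding $f([(v,g),(w,h)]_{\tilde{\mathfrak g}'})$ and $[f(v,g),f(w,h)]_{\tilde{\mathfrak g}}$ reduces the homomorphism property to the single identity $\tilde c'(g,h)=\tilde c(g,h)+\mu([g,h])$, i.e.\ $\tilde c'=\tilde c+d\mu$, which is the hypothesis componentwise. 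Since $f$ has identity diagonal blocks it is invertible, hence a Lie algebra isomorphism, and it plainly fits into the commutative diagram \eqref{equivalent} with $V=\mathbb K^m$; therefore $\tilde{\mathfrak g}'\cong\tilde{\mathfrak g}$.

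The computation is routine and identical to the scalar case treated in Section~\ref{s2}, so there is no serious obstacle; the only points requiring care are bookkeeping. One must check that the single linear map $\mu$ simultaneously trivializes all the differences $\tilde c_i'-\tilde c_i$, which is guaranteed precisely because the coboundary condition is imposed componentwise, and one must keep the sign of $\mu$ in $f$ consistent with the convention $d\mu(g,h)=\mu([g,h])$ used in Section~\ref{s2}. No appeal to Lemma~\ref{main_propos}, to the matrix $A_\Psi$, to any nontrivial automorphism $\Phi$, or to the geometry of the ${\rm Aut}(\mathfrak g)$-action is needed: here $\Phi=\mathrm{Id}$ and the center is preserved identically, and the entire content is the absorption of the coboundary by $f$.
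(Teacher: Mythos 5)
Your proof is correct and coincides with what the paper intends: Proposition~\ref{ext_cohom} is stated without proof, being the $\mathbb K^m$-valued version of the Section~\ref{s2} fact that cohomologous cocycles give equivalent extensions via the shear $f=\mathrm{Id}+\mu$, which is exactly the map you construct (with the sign correctly matched to the direction $f\colon\tilde{\mathfrak g}'\to\tilde{\mathfrak g}$ and the convention $d\mu(g,h)=\mu([g,h])$). Your remark that no automorphism $\Phi$ or matrix $A_\Psi$ is needed is also consistent with the paper's use of this proposition as a complement to Lemma~\ref{main_propos}.
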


Let us formulate a very natural question about the properties of the Lie algebra $ \tilde{\mathfrak g}$, obtained as the central extension of some nilpotent Lie algebra ${\mathfrak g}$: will it be nilpotent, and if so, what will it have nilpotency index $s(\tilde {\mathfrak g})$? It is clear that the answer to this question must be given in terms of the set of cocycles $ (c_1, \dots, c_m)$ which defined the central extension $\tilde {\mathfrak g}$.

The answer to a similar question about the properties of the Lie algebra 
$\tilde {\mathfrak g} $, obtained using the deformation $\Psi $ of the nilpotent Lie algebra ${\mathfrak g} $, can be found in the classical work Vergne \cite{V} in terms of the cohomology of $H^2({\mathfrak g}, {\mathfrak g}) $ of the Lie algebra ${\mathfrak g}$ with coefficients in the adjoint representation. In a sense, the problem solved Vergne is more general, and the answer to our question can in principle be obtained in the form of its corollary. But we will formulate the answer directly, considering its importance for specific applications.

Following \cite{V}, we consider the filtration of a finite-dimensional nilpotent Lie algebra  $\tilde{\mathfrak g}$ by the ideals $\tilde {\mathfrak g}^k$ of its lower central series.  
The last nontrivial ideal $\tilde{\mathfrak g}^s $ belongs to the center
$Z(\tilde {\mathfrak g})$ of the Lie algebra  $\tilde {\mathfrak g}$.

Consider the quotient Lie algebra ${\mathfrak g}=\tilde{\mathfrak{g}}/\tilde {\mathfrak g}^s$ and the corresponding central extension
$$
0 \to \tilde {\mathfrak g}^s \to  \tilde{\mathfrak g}  \to {\mathfrak g} \to 0. 
$$
This central extension is defined by some cocycle $\tilde c$ from $H^2({\mathfrak g}, \tilde {\mathfrak g}^s)$.
Fix a basis $e_1^s, \dots, e_m^s$ of the ideal ${\mathfrak g}^s$ and let us write the cocycle $\tilde c$ in the corresponding coordinates
$
\tilde c=(\tilde c_1,\dots,\tilde c_m).
$
The first question: what can be said about the set of cocycles $\tilde c_1,\dots,\tilde c_m$?

The second question is how to choose a set of $\tilde c_1, \dots, \tilde c_m$ cocycles from $H^2({\mathfrak g}, {\mathbb K})$ so that the corresponding $m$-dimensional central extension of $\tilde {\mathfrak g}$ its nilpotency index $s=s (\tilde {\mathfrak g}) $ would be one greater than the original one of the  Lie algebra ${\mathfrak g}$: $s=s(\tilde {\mathfrak g})=s({\mathfrak g})+1$ and the dimension of the $s$th ideal ${\mathfrak g}^s$ of the lower central series would be exactly $m$?

\begin{definition} Let ${\mathfrak g}$ be a nilpotent Lie algebra. 
We say that a set of cocycles $\{\tilde c_1,{\dots}, \tilde c_m \}$ from $H^2({\mathfrak g},{\mathbb K})$  has filtration $s$  if cocycles of this set are linearly independent modulo subspace $F^{s-1}H^2({\mathfrak g}^*,{\mathbb K})$.
\end{definition}

Recall that the subspace $F^{s-1}H^2({\mathfrak g}^*, {\mathbb K}) $ consists of cohomology classes of two-dimensional cocycles that vanish at
all subsets ${\mathfrak g}^{k_1} \times {\mathfrak g}^{k_2}$ for $k_1+k_2 > s-1$.
\begin{theorem}
\label{main1}
Let ${\mathfrak g}$  be a nilpotent Lie algebra with nil-index  $s-1$ and let  $\tilde c_1,\dots,\tilde c_m$ be a set of cocycles from
$H^2({\mathfrak g}, {\mathbb K})$. The Lie algebra $\tilde{\mathfrak g} $, defined by the corresponding central extension corresponding to this set, is a nilpotent Lie algebra of nil-index $s$ and $\dim{ \tilde {\mathfrak g}^s}=m$ if and only if the set of cocycles   $\tilde c_1,\dots,\tilde c_m$ has filtration $s$.
\end{theorem}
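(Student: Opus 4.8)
The plan is to reduce both requirements on $\tilde{\mathfrak g}$ --- nil-index exactly $s$ and $\dim\tilde{\mathfrak g}^s=m$ --- to the single condition $\tilde{\mathfrak g}^s=V$, and then to express the latter through the filtered position of the cocycles. First I would record the behaviour of the lower central series under the projection $\pi:\tilde{\mathfrak g}\to{\mathfrak g}$. Since $\pi$ is a surjective homomorphism, $\pi(\tilde{\mathfrak g}^k)={\mathfrak g}^k$ for every $k$; because ${\mathfrak g}$ has nil-index $s-1$ we have ${\mathfrak g}^s=0$, whence $\tilde{\mathfrak g}^s\subseteq\ker\pi=V$. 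As $V$ is central, $\tilde{\mathfrak g}^{s+1}=[\tilde{\mathfrak g},\tilde{\mathfrak g}^s]\subseteq[\tilde{\mathfrak g},V]=0$, so $\tilde{\mathfrak g}$ is automatically nilpotent of nil-index at most $s$. Consequently ``nil-index $=s$ and $\dim\tilde{\mathfrak g}^s=m$'' holds if and only if $\tilde{\mathfrak g}^s=V$ (recall $\dim V=m$), which is the single statement I must characterise.

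Next I would compute $\tilde{\mathfrak g}^s$ explicitly. Writing the bracket of the extension as $[(v,g),(w,h)]=(c(g,h),[g,h]_{\mathfrak g})$ with $c=(\tilde c_1,\dots,\tilde c_m)$, and using $\pi(\tilde{\mathfrak g}^{s-1})={\mathfrak g}^{s-1}$ together with $[{\mathfrak g},{\mathfrak g}^{s-1}]={\mathfrak g}^s=0$, one finds
$$
\tilde{\mathfrak g}^s=[\tilde{\mathfrak g},\tilde{\mathfrak g}^{s-1}]=\operatorname{span}\{\,c(g,h): g\in{\mathfrak g},\ h\in{\mathfrak g}^{s-1}\,\}=c\bigl({\mathfrak g}\wedge{\mathfrak g}^{s-1}\bigr)\subseteq V.
$$
Hence $\tilde{\mathfrak g}^s=V$ if and only if $c|_{{\mathfrak g}\wedge{\mathfrak g}^{s-1}}$ is onto, equivalently no nonzero covector $\lambda=(\lambda_1,\dots,\lambda_m)$ annihilates the image; since $\lambda\cdot c=\sum_l\lambda_l\tilde c_l=:\gamma_\lambda$, this says there is no $\lambda\neq0$ for which the scalar cocycle $\gamma_\lambda$ vanishes on ${\mathfrak g}\times{\mathfrak g}^{s-1}$.

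The crux is to identify the condition ``$\gamma$ vanishes on ${\mathfrak g}\times{\mathfrak g}^{s-1}$'' with ``$[\gamma]\in F^{s-1}H^2({\mathfrak g},{\mathbb K})$'' for a $2$-cocycle $\gamma$. One implication is cheap: membership in $F^{s-1}H^2$ means $\gamma$ is cohomologous to a cocycle of filtration $\le s-1$, which by definition vanishes on all ${\mathfrak g}^{k_1}\times{\mathfrak g}^{k_2}$ with $k_1+k_2\ge s$, in particular on ${\mathfrak g}^1\times{\mathfrak g}^{s-1}$; and every coboundary $d\mu$ also vanishes there because $d\mu(x,y)=-\mu([x,y])$ and $[{\mathfrak g},{\mathfrak g}^{s-1}]={\mathfrak g}^s=0$, so vanishing on ${\mathfrak g}\times{\mathfrak g}^{s-1}$ is a property of the class. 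The reverse implication --- that vanishing on the single pair ${\mathfrak g}^1\times{\mathfrak g}^{s-1}$ already forces vanishing on every ${\mathfrak g}^{k_1}\times{\mathfrak g}^{k_2}$ with $k_1+k_2\ge s$, i.e. filtration $\le s-1$ --- is where the real work lies and is the step I expect to be the main obstacle. I would prove it by induction on $k_1$ using the cocycle identity: an element of ${\mathfrak g}^{k_1}$ may be written as $[x,y]$ with $x\in{\mathfrak g}$, $y\in{\mathfrak g}^{k_1-1}$, and $\gamma([x,y],z)=-\gamma([y,z],x)-\gamma([z,x],y)$ rewrites the value on the pair $(k_1,k_2)$ in terms of values on pairs $(1,k_1+k_2-1)$ and $(k_1-1,k_2+1)$, both covered by the base case and the inductive hypothesis.

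Finally I would assemble the chain of equivalences. From the previous two paragraphs, $\tilde{\mathfrak g}^s=V$ holds if and only if there is no $\lambda\neq0$ with $\sum_l\lambda_l[\tilde c_l]=[\gamma_\lambda]\in F^{s-1}H^2$, i.e. if and only if the classes $[\tilde c_1],\dots,[\tilde c_m]$ are linearly independent modulo $F^{s-1}H^2({\mathfrak g},{\mathbb K})$ --- precisely the statement that the set $\{\tilde c_1,\dots,\tilde c_m\}$ has filtration $s$. Combined with the reduction in the first paragraph, this yields the theorem.
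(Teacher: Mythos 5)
Your argument is correct, and in the harder (sufficiency) direction it takes a genuinely different route from the paper. Both proofs share the same backbone: since $V$ is central and ${\mathfrak g}^s=0$, the ideal $\tilde{\mathfrak g}^s$ is exactly the image $c({\mathfrak g}\wedge{\mathfrak g}^{s-1})\subseteq V$, so everything reduces to surjectivity of $c$ on ${\mathfrak g}\wedge{\mathfrak g}^{s-1}$, dually to the non-existence of a covector $\lambda\neq 0$ with $\gamma_\lambda=\sum_l\lambda_l\tilde c_l$ vanishing there. The divergence is in how one matches this with the filtration condition. The paper works with the dual filtration $L_1\subset\dots\subset L_{s-1}$ by annihilators of the lower central series, chooses an adapted basis, puts each cocycle into a normal form $\sum\alpha^n_{j,l}\,e_1^j\wedge e_{s-1}^l+(\text{terms in }\Lambda^2(L_{s-2}))$ by showing via $d\tilde c_n=0$ that the quadratic term in the top-level functionals must vanish, and then finishes with a row-rank versus column-rank argument for the matrix $(\alpha^j_{r,i})$. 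You instead prove the cleaner key lemma that for a $2$-\emph{cocycle} $\gamma$, vanishing on the single pair ${\mathfrak g}^1\times{\mathfrak g}^{s-1}$ already forces vanishing on every ${\mathfrak g}^{k_1}\times{\mathfrak g}^{k_2}$ with $k_1+k_2\ge s$, by induction on $k_1$ using the cocycle identity $\gamma([x,y],z)=-\gamma([y,z],x)-\gamma([z,x],y)$; together with the observation that coboundaries vanish on ${\mathfrak g}\times{\mathfrak g}^{s-1}$ (because $[{\mathfrak g},{\mathfrak g}^{s-1}]=0$), this identifies ``$[\gamma_\lambda]\in F^{s-1}H^2$'' with ``$\gamma_\lambda$ vanishes on ${\mathfrak g}\times{\mathfrak g}^{s-1}$'' and collapses both directions of the theorem into one chain of equivalences. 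Your version is more elementary (no adapted bases, no normal form) and is also slightly more careful on a point the paper glosses over in its necessity argument, namely that membership in $F^{s-1}H^2$ only gives vanishing up to a coboundary; the paper's approach, in exchange, produces the explicit normal form for the cocycles and the structure constants $[e^1_r,e^{s-1}_i]=\sum_j\alpha^j_{r,i}e^s_j$, which the author reuses in later computations.
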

\begin{proof}
Let us prove the necessity of the condition formulated in the theorem, for which we choose in the ideal $\tilde {\mathfrak g}^s$ of the Lie algebra $\tilde {\mathfrak g}$ obtained as a central extension ${\mathfrak g}$ with a basis $e_1^s,\dots, e_m^s$. The original Lie algebra ${\mathfrak g} $ had the nilpotency index $s-1$, so all these vectors were added with a central extension and, thus, we can assume that if necessary we can replace the base and hence  
$de^1_s=\tilde c_1,\dots, de^m_s=\tilde c_m$. We denote by symbols $e^1_s,\dots, e^m_s$ the linear functions from the basis which is dual to $e_1^s,\dots, e_m^s$.
Suppose there is a linear combination
$ \alpha_1 \tilde c_1+\dots + \alpha_m \tilde c_m$, which vanishes at
all subsets
${\mathfrak g}^{k_1} \times {\mathfrak g}^{k_2}$ for $k_1+k_2 > s-1$. This in particular means that 
$$
(\alpha_1\tilde c_1+\dots+\alpha_m\tilde c_m)({\mathfrak g}^1, {\mathfrak g}^{s-1})=(\alpha_1e^1_s+\dots+\alpha_me^m_s)([{\mathfrak g}^1, {\mathfrak g}^{s-1}])=0.
$$
Thus, this linear combination identically vanishes on the whole ideal ${\mathfrak g}^s$ that contradicts the choice of functionals 
$e^1_s,\dots, e^m_s$.

Now we prove the statement of the theorem in a different direction. We use filtering (\ref{g*filtr}). Obviously, there is an inclusion
$$
F^{s-1}\Lambda^2({\mathfrak g})\subset \Lambda^2(L_{s-1}),
$$
where $L_{s-1}$ denotes the annihilator of the ideal  $\tilde {\mathfrak g}^s$ in the dual space $\tilde {\mathfrak g}^*$. Thus
$1$-forms $e^1,\dots,e^m$ from $\tilde {\mathfrak g}^*$ such that $de^1=\tilde c_1,\dots, de^m=\tilde c_m$ belong to the subspace $L_s=\tilde {\mathfrak g}^*$ (the annihilator of the ideal) $\tilde {\mathfrak g}^{s+1}=\{0\}$).

The subspace of $F^{s-1}\Lambda^2({\mathfrak g})$, generally speaking, does not coincide with $\Lambda^2(L_{s-1})$ and a linear independence of $2$-form modulo $F^{s-1}\Lambda^2({\mathfrak g})$  does not imply their independence modulo the larger subspace $\Lambda^2(L_{s-1})$, but in a situation where these $2$-forms are cocycles, this becomes a valid statement. To prove it, we construct the basis of the subspace  $L_{s-1}$. As the first step of its construction, we choose the basis $e_1^1,\dots, e_1^{m_1}$ of the subspace $L_1$ of closed $1$-forms, let us add forms
 $e_2^1,\dots, e_2^{m_2}$ for a basis $L_2$, and so on. Last in this basis we add linear functions $e_{s-1}^1,\dots, e_{s-1}^{m_{s-1}}$.  

It is easy to see that such a basis is a dual basis to some basis of an extendable Lie algebra ${\mathfrak g}$ 
$$
e_1^1,\dots,e_{j_1}^1,e_1^2,\dots,e^2_{j_2},\dots,e_1^{s-1},\dots,e_{j_{s-1}}^{s-1} ,
$$
where the vectors  $e_1^{s-1},\dots,e_{j_{s-1}}^{s-1}$ form the basis ${\mathfrak g}^{s-1}$, and vectors $e_1^{s-2},\dots,e_{j_{s-2}}^{s-2}$ complement $e_1^{s-1},\dots,e_{j_{s-1}}^{s-1}$ to a basis of the ideal ${\mathfrak g}^{s-2}$ and so on. The latter are added vectors  $e_1^1,\dots,e_{j_1}^1$ that complement the already constructed basis of the commutant  $[{\mathfrak g},{\mathfrak g}]$ to the basis of the entire Lie algebra ${\mathfrak g}$.

Each cocycle $\tilde c_n \in \Lambda^2(L_{s-1}), n=1, \dots, m$ of our set can be written as follows
$$
\tilde c_n = \sum_{j=1}^{m_1}\sum_{l=1}^{m_{s-1}} \alpha_{j,l}^ne_1^j \wedge e_{s-1}^l + \tilde c_{2,n}, \tilde c_{2,n}\in \Lambda^2 (L_{s-2}),\;  n=1,\dots,m,
$$
Indeed, choosing in a different way linear functions
$e_{s-1}^1,\dots, e_{s-1}^{m_{s-1}}$ we may assume that the cocycle $\tilde c_n$ as an arbitrary element of filtration $s-1$  can be written in a following way for some $q $ such that $2\le 2q \le j_{s-1},$ and $q$ depends on $n$:
$$
\tilde c_n=\gamma_r\sum_{k=1}^{q}e_{s-1}^{2k-1}\wedge e_{s-1}^{2k}+ \sum_{ i=1}^{j_{s-1}} \omega_i^n\wedge e_{s-1}^i+\tilde c_2,  \omega_i^n \in L_{s-2}, 
1 \le i \le j_{s-1},
\tilde c_{2,n} \in \Lambda^2(L_{s-2}).
$$
Express the differential $d\tilde c_n$
$$
d \tilde c_n=\gamma_n \sum_{k=1}^{q}(de_{s-1}^{2k-1}\wedge e_{s-1}^{2k}-e_{s-1}^{2k-1}\wedge de_{s-1}^{2k})+ \sum_{i=1}^{j_{s-1}} d\omega_i^n\wedge e_{s-1}^i+\Omega_n, \Omega_n \in \Lambda^3(L_{s-2}).
$$
If $\gamma_n \ne 0$, then $d(e^{2q}_{s-1} + \omega_{2q}^n) = 0$. Where it follows that $e^{2q}_{s-1}+\omega_{2q}^n \in L_1$ or $e^{2q}_{s-1} \in L_{s-2}$, which is contrary to the choice of the basis $e^1_{s-1},\dots,e^{j_{s-1}}_{s-1}$.

In the case $\gamma_n=0$ all functionals $\omega_i^n$ have to be closed and hence $\omega_i^n=\sum_{j=1}^{j_1}\alpha_{i,j}^ne^j_1$ for $1 \le i \le j_{s-1}$ which gives the required presentation for $\tilde c_n$.

By definition of the central expansion and the differential $d$ we have
$$
[e_{r}^1,e_i^{s-1}]=\sum_{j=1}^m\alpha_{r,i}^je_j^s, \; 1 \le r \le j_1, 1\le i \le j_{s-1},
$$ 
where $e_1^s,\dots,e_m^s$ are vectors added while central extension.
Consider the linear span of a system of $N=j_1j_{s-1}$ vectors
$$
[e_{r}^1,e_i^{s-1}], 1 \le r \le j_1, 1\le  i \le j_{s-1}.
$$ 
Let us prove that its dimension is
$m$.  Indeed, we enumerate the set of pairs of natural numbers in some (standard) way.
with numbers from $1$ to $N=j_1j_{s-1}$. The corresponding numbering index is denoted by $q=q(r,i)$.
Write the matrix $A=(\alpha_{q}^j)=(\alpha_{(r,i)(q)}^j)$. It coincides with the matrix, where the coordinates of the vectors  
$[e_{r}^1,e_i^{s-1}]$ with respect to the basis $e_1^s,\dots,e_m^s$ are in columns. However its $j$th 
row  consists of numbers of the form $\alpha_{r, i}^j$, written in one line using our ordering with the corresponding index
 $q, 1 \le q \le N=j_1j_{s-1}$. The rank of the row system of such a matrix is $m$ by the condition, which means that the rank of its column system is also $m$. We thereby proved that the dimension of the ideal $\tilde{\mathfrak g}^s$ of the central extension $\tilde {\mathfrak g}$ is $m$.
\end{proof}

Consider two central extensions $\tilde {\mathfrak g} $ and $ \tilde{\mathfrak g} '$ of the same Lie algebra ${\mathfrak g}$ of nil-index $s-1$. Assume that  $\tilde {\mathfrak g}$ and $\tilde {\mathfrak g}'$ both are nilpotent Lie algebras with the same nil-index $s$. 

Let there also be an isomorphism $f: \tilde {\mathfrak g}' \to \tilde {\mathfrak g}$. The equality   $f\left(\tilde {\mathfrak g}'^k\right)=\tilde {\mathfrak g}^k$  holds for all ideals of the lower central series. Where it follows in particular that
$$
f\left(( \tilde {\mathfrak g}')^s\right)=\tilde {\mathfrak g}^s,
$$
we have a commutative diagram
\begin{equation}
\label{isom}
\begin{CD}0 @>>> \tilde {\mathfrak g}^s@>{i_1}>>\tilde {\mathfrak g} @>\pi_1>>{\mathfrak g}@>>>0\\
    @AAA @AA{\Psi}A @AA{f}A  @AA{\Phi}A @AAA\\
    0 @>>> (\tilde {\mathfrak g}')^s@>{i_2}>>\tilde {\mathfrak g}' @>\pi_2>>{\mathfrak g}@>>>0,
\end{CD}
\end{equation}
where the symbol $\Psi$ denotes an isomorphism of vector spaces $\tilde {\mathfrak g}^s$ and $(\tilde {\mathfrak g}')^s$.  $\Phi$ stands for some automorhism of the Lie algebra ${\mathfrak g}$.
\begin{remark}
The restrictions that we impose on the choice of the set of cocycles $ \tilde c_1, \dots, \tilde c_m $ exclude collisions, shown by the example of \ref{collision}: now all isomorphisms are compatible with the embeddings $i_1$ and $i_2$.
\end{remark}
\begin{theorem}
\label{main2}
Let ${\mathfrak g}$ be a nilpotent Lie algebra of nil-index $s-1$ and also $\{ \tilde c_1,{\dots}, \tilde c_m \}$ and $\{ \tilde c_1',{\dots}, \tilde c_m'\}$ be two sets of cocycles of filtration $s$  in $H^2({\mathfrak g}, {\mathbb K})$. 
They define isomorphic central extensions  $\tilde {\mathfrak g}$ and $\tilde {\mathfrak g}'$  if and only if the linear spans  $\langle \tilde c_1,{\dots}, \tilde c_m \rangle$ and $\langle \tilde c_1',{\dots}, \tilde c_m'\rangle$ lie in the same orbit of the linear action of the automorphism group ${\rm Aut}({\mathfrak g})$  on the space   $H^2({\mathfrak g}, {\mathbb K})$.   
\end{theorem}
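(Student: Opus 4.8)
The plan is to reduce both implications to Lemma \ref{main_propos} and Proposition \ref{ext_cohom}, the only genuinely nontrivial point being to rule out the collision phenomenon of Example \ref{collision}. The crucial observation is that the hypothesis of filtration $s$ makes the distinguished central subspaces canonical. By Theorem \ref{main1} the extensions $\tilde{\mathfrak g}$ and $\tilde{\mathfrak g}'$ are nilpotent of nil-index $s$ with $\dim\tilde{\mathfrak g}^s=\dim(\tilde{\mathfrak g}')^s=m$, and moreover the central vectors $e_1^s,\dots,e_m^s$ adjoined by the extension span precisely the top ideal, so that $i_1({\mathbb K}^m)=\tilde{\mathfrak g}^s$ and $i_2({\mathbb K}^m)=(\tilde{\mathfrak g}')^s$. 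Since any Lie algebra isomorphism carries the lower central series of the source onto that of the target, every isomorphism is automatically compatible with the embeddings $i_1,i_2$, which is exactly the extra hypothesis required to invoke Lemma \ref{main_propos}.

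For the necessity, I would start from an isomorphism $f:\tilde{\mathfrak g}'\to\tilde{\mathfrak g}$. By the preceding remark $f\bigl((\tilde{\mathfrak g}')^s\bigr)=\tilde{\mathfrak g}^s$, that is, $f\bigl(i_2({\mathbb K}^m)\bigr)=i_1({\mathbb K}^m)$, so Lemma \ref{main_propos} supplies a nondegenerate matrix $A_\Psi$ and an automorphism $\Phi\in{\rm Aut}({\mathfrak g})$ with
$$
(\tilde c_1,\dots,\tilde c_m)A_\Psi=(\Phi^*(\tilde c_1'),\dots,\Phi^*(\tilde c_m')).
$$
Passing from cocycles to cohomology classes (the map $Z^2({\mathfrak g},{\mathbb K})\to H^2({\mathfrak g},{\mathbb K})$ is linear and intertwines $\Phi^*$) yields the same relation in $H^2({\mathfrak g},{\mathbb K})$. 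Because both sets have filtration $s$, their classes are linearly independent in $H^2({\mathfrak g},{\mathbb K})$, so each side spans an $m$-dimensional subspace; invertibility of $A_\Psi$ then forces $\langle\tilde c_1,\dots,\tilde c_m\rangle=\Phi^*\langle\tilde c_1',\dots,\tilde c_m'\rangle$ as points of $Gr(m,H^2({\mathfrak g},{\mathbb K}))$. Hence the two spans lie in one ${\rm Aut}({\mathfrak g})$-orbit.

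For the sufficiency, suppose $\Phi^*\langle\tilde c_1',\dots,\tilde c_m'\rangle=\langle\tilde c_1,\dots,\tilde c_m\rangle$ for some $\Phi\in{\rm Aut}({\mathfrak g})$. Choosing an invertible matrix $B=(B_{ji})$ that realizes this equality of subspaces in $H^2({\mathfrak g},{\mathbb K})$, I would lift it to the cocycle level as $\tilde c_i=\sum_j B_{ji}\,\Phi^*(\tilde c_j')+d\mu_i$ for suitable linear functionals $\mu_i:{\mathfrak g}\to{\mathbb K}$. Setting $\hat c_i=\sum_j B_{ji}\,\Phi^*(\tilde c_j')$, each $\hat c_i$ is cohomologous to $\tilde c_i$, so by Proposition \ref{ext_cohom} the extension $\hat{\mathfrak g}$ defined by $\{\hat c_i\}$ is isomorphic to $\tilde{\mathfrak g}$. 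Finally the exact cocycle identity $(\hat c_1,\dots,\hat c_m)B^{-1}=(\Phi^*(\tilde c_1'),\dots,\Phi^*(\tilde c_m'))$ is precisely the relation of Lemma \ref{main_propos} for the pair $\{\hat c_i\},\{\tilde c_j'\}$ with $A_\Psi=B^{-1}$, giving $\hat{\mathfrak g}\cong\tilde{\mathfrak g}'$; composing, $\tilde{\mathfrak g}\cong\tilde{\mathfrak g}'$.

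The only delicate step, and the place where the filtration-$s$ hypothesis is indispensable, is the identification of the first paragraph: without it the images $i_1({\mathbb K}^m)$ and $i_2({\mathbb K}^m)$ need not be intrinsic to the Lie algebras (Example \ref{collision}), an isomorphism need not respect them, and Lemma \ref{main_propos} would not apply. Once this canonicity of $i_1({\mathbb K}^m)=\tilde{\mathfrak g}^s$ and $i_2({\mathbb K}^m)=(\tilde{\mathfrak g}')^s$ is secured, everything else is bookkeeping between $Z^2({\mathfrak g},{\mathbb K})$ and $H^2({\mathfrak g},{\mathbb K})$ together with the two cited results.
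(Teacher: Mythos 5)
Your proof is correct, and it is organized rather differently from the paper's. The paper proves only the ``sufficiency'' direction (same orbit $\Rightarrow$ isomorphic extensions), and does so by a bare-hands construction: it picks $\Phi$ with $\Phi(L')=L$, defines $\psi:L\to L$ sending $\Phi\cdot\tilde c_i'$ to $\tilde c_i$, sets $f((a,g))=(\Psi(a),\Phi(g))$ with $\Psi=\psi\circ\Phi$, and verifies compatibility with the brackets directly; the ``necessity'' direction is explicitly left as an exercise. You instead reduce both implications to Lemma \ref{main_propos} and Proposition \ref{ext_cohom}. For sufficiency this buys you a cleaner treatment of the one point the paper glosses over: the orbit condition is an equality of subspaces of $H^2$, so the cocycles are only matched up to coboundaries, and your interposition of the auxiliary set $\hat c_i=\sum_j B_{ji}\Phi^*(\tilde c_j')$ together with Proposition \ref{ext_cohom} handles that discrepancy explicitly, whereas the paper's bracket computation tacitly assumes a cocycle-level equality. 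For necessity, your argument supplies exactly the content the paper relegates to its preceding Remark: since the sets have filtration $s$, Theorem \ref{main1} gives $\dim\tilde{\mathfrak g}^s=m$, and the inclusion $\tilde{\mathfrak g}^s\subset i_1({\mathbb K}^m)$ (forced because the quotient has nil-index $s-1$) becomes an equality by dimension count, so every isomorphism respects the embeddings and Lemma \ref{main_propos} applies. The trade-off is that the paper's construction is self-contained and exhibits the isomorphism concretely, while yours is shorter, covers both directions, and makes visible precisely where the filtration-$s$ hypothesis is used to exclude the collision of Example \ref{collision}.
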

\begin{proof}
Let us prove this proposition in one direction; the converse is left as an elementary exercise for the reader. Let $L$ and $L '$ be two linear subspaces
in cohomology $H^2({\mathfrak g})$ such that  $\Phi(L')=L$ where $\Phi \in Aut({\mathfrak g})$ denotes some automorphism of the Lie algebra  ${\mathfrak g}$. We assume that both subspaces are given as linear spans of two basic sets of cocycles $\langle \tilde c_1,{\dots}, \tilde c_{j_k} \rangle$ and $\langle \tilde c_1',{\dots}, \tilde c_{j_k}'\rangle$ of filtration $s$. We apply the standard formulas for $\Phi$-action on bilinear forms.
$$
(\Phi \cdot \tilde c_l')(x,y)= \tilde c_l'(\Phi^{-1} x, \Phi^{-1} y), \forall x,y \in {\mathfrak g}, \; l=1,\dots, j_k, \; \Phi \in Aut({\mathfrak g}).
$$
Bilinear functions  $\Phi \cdot \tilde c_1', \dots, \Phi \cdot \tilde c_{j_k}'$ form a basis in $L$ as well as $ \tilde c_1,{\dots}, \tilde c_{j_k} $. Thus, there is an automorphism $\psi: L \to L$ such that
$$
\psi \left( \Phi \cdot \tilde c_1'\right)=\tilde c_1, \dots, \psi \left( \Phi \cdot \tilde c_{j_k}'\right)=\tilde c_{j_k}.
$$ 
Define $\tilde {\mathfrak g}=L\oplus{\mathfrak g}$ and $\tilde {\mathfrak g}'=L'\oplus{\mathfrak g}$ as vector spaces. Set linear mapping $f: \tilde {\mathfrak g}' \to \tilde {\mathfrak g}$ by the formula $f((a,g))=(\Psi(a),\Phi(g))$, where $\Psi(a)=\psi \cdot (\Phi \cdot a)$ and $\Phi$ is an automorphism of the Lie algebra ${\mathfrak g}$. Evidently that this mapping is an automorphism of vector spaces.

Check the compatibility of the mapping $f$ with the Lie brackets of Lie algebras $\tilde {\mathfrak g}'$ and  $\tilde {\mathfrak g}$
$$
\begin{array}{c}
f([(a,g),(b,h)]_{\tilde {\mathfrak g}'})=f((\tilde c'(g,h),[g,h]_{\mathfrak g}))
=(\Psi\tilde c'(g,h),\Phi([g,h]_{\mathfrak g}))=\\
=(\tilde c(\Phi g,\Phi h),[\Phi g, \Phi h ]_{\mathfrak g})=[(\Psi a,\Phi g),(\Psi b,\Phi h)]_{\tilde {\mathfrak g}}=[f(a,g),f(b,h)]_{\tilde {\mathfrak g}}
\end{array}
$$
\end{proof}

Fix the main result of this section. The recursive method of successive central extensions is constructed for defining and classifying finite-dimensional nilpotent Lie algebras.
We note its monotonicity: 1) at each step and the dimension and nilpotency index of an expandable Lie algebra increase; 2) two non-isomorphic nilpotent Lie algebras can no longer have isomorphic central extensions with our restrictions on sets of cocycles. We can illustrate the last remark in a way from genealogy: the set of "descendants" (i.e., the set of consecutive central extensions) of two different algebras from our list cannot intersect \cite{Mill5}, and there are algebras that "have no progeny," example of such an algebra ${\mathfrak m}_1(2k-1)$ we give in the section \ref{s6}.

How effective is this method? Is it possible to classify with its help nilpotent Lie algebras of small dimensions? The Morozov \cite{Mor} classification of $6$-dimensional nilpotent Lie algebras has long been known and there are several classification lists of $7$-dimensional complex Lie algebras. However, the hope of success of such a classification in subsequent dimensions will be very restrained: it suffices to recall the classification of metabelian Lie algebras \cite{Gaug, GalTim}, which we will have to include as an integral part of this classification.
The maximum dimension in which metabelian Lie algebras are classified is $9$ to date \cite{GalTim}.
Despite all these difficulties,
it would still be useful to implement the constructed method for the classification of $7$-dimensional and $8$-dimensional nilpotent Lie algebras. We will postpone such research to subsequent publications.

\section{Rigid central extensions of nilpotent Lie algebras}\label{s5}
\begin{definition}
Set of cohomology classes $\tilde c= (\tilde c_1,\tilde c_2,\dots,\tilde c_m)$ from the space  $(H^2({\mathfrak g}, {\mathbb K}))^m$ of nilpotent Lie algebra  ${\mathfrak g}$  is called {\it geometrically rigid} if such a neighborhood exists
$U(\tilde c) \subset (H^2({\mathfrak g}, {\mathbb K}))^m$ (in the standard topology of a finite-dimensional space) that for any other set of cocycles $\tilde c '$ from this neighborhood the corresponding Lie algebra
 ${\mathfrak g}_{\tilde c'}$ constructed as a central extension  ${\mathfrak g}$ over the set  ${\tilde c'}$ over the set $ {\tilde c'} $ will be isomorphic to a Lie algebra ${\mathfrak g}_{\tilde c}$.
\end{definition}

We will immediately clarify that in algebraic literature more often, when it comes to the orbits of an algebraic group, the Zarissky topology is considered and usually the openness of the orbit is understood precisely in the sense of this topology.
We will now use an equivalent geometric approach and, accordingly, consider the standard Euclidean topology of a finite-dimensional space to visually describe the orbit spaces of the actions we need for the algebraic subgroups of $GL_2 $ on some cohomology spaces $H^2({\mathfrak g}, {\mathbb R})^m$ of small dimensions -- a similar geometric approach was considered in \cite{Gorb}. It is the real classification that is our main goal, in the light of its various geometric applications. The study of the orbit space of the action of an algebraic group on an affine variety is the subject of the classical theory of invariants, but the goal of this article is more modest: {\it we want to depict orbits that are interesting to us using images and means of elementary low-dimensional geometry.}

Since the natural action of the group ${\rm Aut}({\mathfrak g})$ on the two-dimensional cohomology space $H^2({\mathfrak g}, {\mathbb K}) $ is algebraic, the following statement is true.
\begin{propos}
\label{finite_rigid}
Let the orbit space of the action $GL_m \times {\rm Aut} ({\mathfrak g}) $ on the space $(H^2({\mathfrak g}, {\mathbb K})^m$ be a finite set. Then there is at least one rigid set of cocycles $\tilde c= (\tilde c_1,\tilde c_2,\dots,\tilde c_m)$.
\end{propos}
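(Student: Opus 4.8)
The plan is to translate geometric rigidity into the openness of a single orbit and then to produce an open orbit from finiteness together with the irreducibility of the ambient affine space. First I would record the translation. By Lemma~\ref{main_propos}, Proposition~\ref{ext_cohom} and Theorem~\ref{main2}, two sets of cocycles $\tilde c$ and $\tilde c'$ from $(H^2(\mathfrak{g},\mathbb{K}))^m$ define isomorphic central extensions precisely when they lie in the same orbit of the action of $G=GL_m\times{\rm Aut}(\mathfrak{g})$ (the $GL_m$-factor realising the change-of-basis matrix $A_{\Psi}$ in the centre, the ${\rm Aut}(\mathfrak{g})$-factor realising $\Phi^*$, and Proposition~\ref{ext_cohom} absorbing the coboundary ambiguity). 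Consequently a set $\tilde c$ is geometrically rigid if and only if its orbit $O(\tilde c)$ contains a Euclidean neighbourhood of $\tilde c$; since $G$ acts transitively on its own orbit, this holds at one point of $O(\tilde c)$ iff it holds at every point, i.e. iff $O(\tilde c)$ is open. Thus it suffices to exhibit a single open orbit.

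Next I would exploit the algebraicity of the action. The group $G$ is a linear algebraic group acting algebraically on the affine space $X=(H^2(\mathfrak{g},\mathbb{K}))^m\cong\mathbb{K}^N$. By the standard theorem on orbits of algebraic group actions, each orbit is a locally closed subvariety, i.e. Zariski-open in its Zariski closure $\overline{O_i}$. Under the hypothesis there are only finitely many orbits $O_1,\dots,O_k$, so
$$
X=\bigcup_{i=1}^{k}O_i\subseteq\bigcup_{i=1}^{k}\overline{O_i}.
$$
Since $X$ is irreducible and each $\overline{O_i}$ is Zariski-closed, one closure must fill the whole space, say $\overline{O_j}=X$; the orbit $O_j$ is then dense and, being open in its closure, is Zariski-open in $X$.

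It remains to pass to the Euclidean topology in which rigidity is phrased. Over $\mathbb{C}$ this is immediate: a nonempty Zariski-open set is Euclidean-open, so $O_j$ is open and every $\tilde c\in O_j$ is rigid. Over $\mathbb{R}$ I would instead argue by dimension: the orbits are semialgebraic smooth submanifolds partitioning the $N$-dimensional manifold $X$, and a submanifold of dimension strictly below $N$ has Lebesgue measure zero. A finite union of measure-zero sets cannot exhaust $X$, so some orbit must have dimension $N$ and is therefore Euclidean-open; any point of it is a rigid set of cocycles.

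The main obstacle is the real case and, more precisely, the passage between the Zariski and Euclidean topologies: over $\mathbb{R}$ a dense Zariski-open orbit need not coincide with a full-dimensional one in the Euclidean sense, so the clean irreducibility argument has to be supplemented (or replaced) by the measure/dimension count above. That count, in turn, rests on the nontrivial input that the orbits of the algebraic action are locally closed smooth submanifolds, which is where I would concentrate the care in a full write-up.
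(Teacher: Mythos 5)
Your proof is correct and follows exactly the route the paper intends: the paper offers no written proof of Proposition~\ref{finite_rigid}, only the preceding remark that the action of ${\rm Aut}({\mathfrak g})$ on $H^2({\mathfrak g},{\mathbb K})$ is algebraic, and your argument (locally closed orbits, irreducibility forcing one orbit closure to be everything, plus the dimension/measure count to handle ${\mathbb K}={\mathbb R}$) is precisely the standard elaboration of that remark. The only caveat worth noting is that your ``if and only if'' between rigidity and openness of the orbit is stronger than what Lemma~\ref{main_propos} and Theorem~\ref{main2} actually give (isomorphic extensions need not lie in one orbit without the filtration hypothesis, as the paper's own remark in Section~2 warns), but only the direction ``open orbit $\Rightarrow$ rigid'' is used, so the proof stands.
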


We begin the study of examples from the simplest case.
Every non-abelian three-dimensional nilpotent Lie algebra is metabelian and can be obtained as a one-dimensional central extension of a two-dimensional abelian algebra  ${\mathfrak m}_0(1)=\langle e_1, e_2\rangle$. Its cocycle $e^1\wedge e^2$ spans the intire space $H^*({\mathfrak m}(1))$.
Automorphism group  ${\rm Aut}({\mathfrak m}(1))=GL_2$ acts on the line $H^2({\mathfrak m}(1))=\langle e^1 \wedge e^2\rangle$ as multiplication by the determinant$\det A, A \in GL_2$ pf the matrix $A$ of the corresponding automorphism. The orbits of such an action will be only two: 1) single-point, consisting of the zero cohomology class; 2) an open orbit consisting of a complement to zero on the number line. Thus the cocycle $e^1\wedge e^2$ for the Lie algebra ${\mathfrak m}_0(1)$ is geometrically rigid and corresponding central extension ${\mathfrak m}_0(2)$ commonly called the three-dimensional Heisenberg Lie algebra
 ${\mathfrak h}_3$.The latter is isomorphic to the Lie algebra of strictly upper triangular matrices of order three and can be defined using the basis $e_1, e_2, e_3 $ and one non-trivial commutation relation $[e_1,e_2]=e_3$ (the remaining commutation relations have the form $ [e_i,e_j]=0 $). As a methodical corollary, we have obtained the well-known classification of three-dimensional nilpotent Lie algebras, up to isomorphism, there are only two: 1) an abelian Lie algebra and 2) a three-dimensional Heisenberg Lie algebra
 ${\mathfrak h}_3$.

We can now continue the process of central extensions and consider the extension of the Heisenberg algebra ${\mathfrak h}_3$. 
\begin{example} 
\label{Heisenberg}
Take as an extendable Lie algebra ${\mathfrak g} $ the three-dimensional Heisenberg Lie algebra ${\mathfrak h}_3$. We will consider its one-dimensional central extensions, i.e. $m=1$. The space of the second cohomology $H^2({\mathfrak m}_0(2),{\mathbb K})$ is two-dimensional (it is spanned by the cocycles  $e^1\wedge e^3$ and $e^2 \wedge e^3$) and, by removing the zero cohomology class from it and taking its quotient by the action $GL_1$, we obtain the projective line ${\mathbb K}P^1$. The group of automorphisms ${\rm Aut}({\mathfrak m}_0(2))$ acts on $\Lambda^*({\mathfrak m}_0(2)^*)$  as follows
$$
\begin{array}{c}
\varphi^*(e^1)=a_{11}e^1+a_{21}e^2, \varphi^*(e^2)=a_{12}e^1+a_{22}e^2,\\

\varphi^*(e^3)=\det Ae^3 +\beta_3e^2+\alpha_3e^1,\\
\varphi^*([e^1\wedge e^3])=\det A(a_{11}[e^1\wedge e^3]+a_{21}[e^2\wedge e^3]);\\
\varphi^*([e^2\wedge e^3])=\det A(a_{12}[e^1\wedge e^3]+a_{22}[e^2\wedge e^3]),
\end{array}
$$
where
$\det A=(a_{11}a_{22}-a_{21}a_{12})\ne 0$. Thus, the action ${\rm Aut}({\mathfrak m}_0 (2))$
on the projective line ${\mathbb K}P^1$ is equivalent to the standard action $GL_2$, which is transitive on ${\mathbb K}P^1$ (for two arbitrary straight lines on a plane passing through the origin, there is always a non-degenerate linear transformation that takes one straight line to another). Thus, an action   $GL_1\times{\rm Aut}({\mathfrak m}_0(2))$ on a space $H^2({\mathfrak m}_0(2), {\mathbb K})$ has only one non-trivial orbit  ${\mathcal O}$ which will be open.  Therefore, any nonzero cocycle from $H^2({\mathfrak m}_0(2),{\mathbb K})$ is rigid, and the corresponding one-dimensional central extension is isomorphic to the Lie algebra ${\mathfrak m}_0(3)$.
\end{example}
As a consequence of the results of the previous example, we obtain a classification of four-dimensional nilpotent Lie algebras, which does not depend on the choice of the ground field ${\mathbb K}$. Up to isomorphism, there are exactly three such Lie algebras: 1) four-dimensional abelian Lie algebra
 ${\mathbb K}\oplus {\mathbb K}\oplus {\mathbb K}\oplus {\mathbb K}$, 2) the direct sum of Lie algebras 
${\mathfrak h}_3\oplus {\mathbb K}$ and 3) ${\mathfrak m}_0(3)$.  As we noted at the end of the previous section, it would be interesting and useful to apply our method in higher dimensions.

Further, we deal with nilpotent Lie algebras that do not have rigid cocycles. According to the sentence \ref{finite_rigid} and Morozov’s classification \cite{Mor} dimension in which we can meet such a Lie algebra cannot be less than six.
\begin{example}
Consider $6$-dimensional graded Lie algebra ${\mathfrak m}_2(5)$ defined by its basis $e_1,e_2,e_3,e_4,e_5,e_6$ and structure relations
\begin{equation}
\label{m_2}
\begin{array}{c}
[e_1,e_2]=e_3, [e_1,e_3]=e_4,  [e_1,e_4]=e_5,  [e_1,e_5]=e_6,  \\

[e_2,e_3]=e_5,  [e_2,e_4]=e_6. \\
\end{array}
\end{equation}

We will study one-dimensional central extensions, i.e. $m=1$ in our general method.

The first thing to do is calculate
the second cohomology is $H^2({\mathfrak m}_2 (5), {\mathbb K})$. They are easily calculated using a second grading, which will be called a weight.
  Recall that the weight $w(\omega)$ of a monomial   $\omega=e^{i_1}\wedge e^{i_2}\wedge \dots \wedge e^{i_p}$ is equal to the sum of the superscripts of the factors in the monomial $\omega$.

The vector space $H^2({\mathfrak m}_2(5), {\mathbb K})$ is the linear span of the following cocycles 
$$
\Omega_7=[e^1{\wedge}e^6+e^2{\wedge}e^5], \omega_7=[e^2{\wedge}e^5-e^3{\wedge}e^4], \omega_5=[e^2{\wedge}e^3].
$$
Next, we find the action $\varphi^*$ of an arbitrary automorphism  $\varphi$
of the Lie algebra ${\mathfrak m}_2(5)$ on its cochain complex
\begin{equation}
\label{autom_m_2}
\begin{array}{c}
\varphi^*(e^1)=\alpha e^1, \varphi^*(e^2)=\alpha^2 e^2,
\varphi^*(e^3)=\alpha^3 e^3 +\beta_3e^2+\alpha_3e^1,\\
\varphi^*(e^4)=\alpha^4 e^4 +\alpha\beta_3 e^3+\beta_4e^2+\alpha_4e^1,\\
\varphi^*(e^5)=\alpha^5 e^5 +\alpha^2\beta_3e^4+ (\alpha\beta_4-\alpha_3\alpha^2)e^3+\beta_5e^2+\alpha_5e^1,\\
\varphi^*(e^6)=\alpha^6 e^6 +\alpha^3\beta_3e^5+ (\alpha^2\beta_4-\alpha_3\alpha^3)e^4+(\alpha\beta_5-\alpha_4\alpha^2)e^3
+\beta_6e^2+\alpha_6e^1.
\end{array}
\end{equation} 
Indeed, the conjugate action $\varphi^*$ of an automorphism $\varphi $ commutes with the differential $d$, which means that the closed forms $e^1$ and $e^2$ must go to closed forms
$$
\varphi^*(e^1)=a_{11}e^1+a_{21}e^2, \varphi^*(e^2)=a_{12}e^1+a_{22}e^2. 
$$ 
A structure relation 
$d\varphi^*(e^3)=\varphi^*(de^3)=\varphi^*(e^1)\wedge \varphi^*(e^2)$
gives the following value of $\varphi^*(e^3)$ 
$$
\varphi^*(e^3)=(a_{11}a_{22}-a_{12}a_{21}) e^3 +\beta_3e^2+\alpha_3e^1,
$$
for some constants $\beta_3,\alpha_3$.
The equality $a_{21}=0$ follows from exactness of the $2$-form on the right-hand side of the relation $d\varphi^*(e^4)=\varphi(e^1)\wedge \varphi(e^3)$.  The exactness of the form $\varphi(e^1)\wedge \varphi(e^4)+\varphi(e^2)\wedge \varphi(e^3)$ will imply the equality $a_{22}=a_{11}^2$ (we will further denote $a_{11}=\alpha$).  
The structure relation  $d\varphi^*(e^6)=\varphi^*d(e^6)$ implies $a_{12}=0$.

Taking quotient of the three-dimensional space $H^2({\mathfrak m}_2 (5), {\mathbb K})$ with punctured the zero cohomology class with respect to the action of the group $GL_1={\mathbb K}^*$ we get 
projective plane ${\mathbb K}P^2$.
The automorphism group $Aut({\mathfrak m}_2(5))$ acts on it. 
According to the formulas (\ref{autom_m_2}), we see that for an arbitrary
automorphism $\varphi$  from $Aut({\mathfrak m}_2(5))$ the following  equalities hold
$$
\varphi^*(\Omega_7)=\alpha^7\Omega_7,
\varphi^*(\omega_7)=\alpha^7\omega_7+(2\alpha^3\beta_4-\alpha\beta_3^2)\omega_5,
\varphi^*(\omega_5)=\alpha^5\omega_5.
$$
Fix cocycles  $\Omega_7, \omega_7, \omega_5$ as the basis of the space  $H^2({\mathfrak m}_2(5), {\mathbb K})$ and denote the affine coordinates corresponding to it as $x_1, x_2, x_3$. 
In homogeneous coordinates $(x_1: x_2: x_3)$ of the projective plane ${\mathbb K}P^2$ the automorphism action $\varphi$ from the group $Aut({\mathfrak m}_2(5))$ on ${\mathbb K}P^2$ is written as follows
 \begin{equation}
\label{proj_action}
(x_1:x_2:x_3) \to \left(x_1:x_2:\left(2\frac{\beta_4}{\alpha^4}-\frac{\beta_3^2}{\alpha^6}\right)x_2+\frac{1}{\alpha^2}x_3 \right),
\end{equation}
where parameters $\beta_4$, $\alpha$ and $\beta_3$ correspond to the action $\varphi$
on the cochain complex according to the formulas (\ref{autom_m_2}).
Thus, in the real case, the orbits of the group action (\ref{proj_action}) of $Aut({\mathfrak m}_2(5))$ on the projective plane ${\mathbb R}P^2$ are

1) point (at infinity) $(1: 0: 0)$;

2) the line $ tx_1-x_2 = 0, t \ ne 0, $ with a punctured point $ (0: 0: 1) $;

3) the point $ (0: 0: 1)$;

4) the half-line $ \{(x: 0: 1), x> 0 \} $;

5) the half-line $ \{(x: 0: 1), x <0 \} $.

Select the representatives in the orbits found above (on a straight line of the form $tx_1-x_2=0$ when $ t \ne 0 $ we take as its representative a point at infinity
 $(1:t:0)$)
$$
\left\{(1:t:0), t \in {\mathbb R}\right\}, (0:0:1), (1:0:1),(-1:0:1).
$$
In the case of a complex field, the last two points from our list will be in the same orbit represented by the point $(1: 0: 1)$. There will be no other changes resulting the transition field ${\mathbb K}={\mathbb C}$.

Obviously, in our example there will be no geometrically rigid cocycle.
Indeed, an infinite number of orbits of the form $tx_1-x_2=0$ passes through an arbitrary neighborhood $U$ of the three-dimensional space $H^2({\mathfrak m} _2(5), {\mathbb K})$ with coordinates $x_1, x_2, x_3$.

The algebras ${\mathfrak g}_ {7,t}$ obtained as central extensions of the algebra
${\mathfrak m}_2(5)$ using cocycles of the one-parameter family
$\Omega_t=\Omega_7+t{\omega}_7$ and relating to different values of the 
parameter $t$ will be pairwise non-isomorphic according to the theorem \ref{main2}. All of them will be filiform and positively graded.
The corresponding one-parameter family of $7$-dimensional Lie algebras
${\mathfrak g}_{7,t}$ is well known in the literature \cite{GJMKh, Mill2}.

We also note that the cocycle $x_1 \Omega_7 + x_2\omega_7 + x_3\omega_5 $ has filtration $6$ with respect to the natural filtration (which corresponds to filtration ${\mathfrak m}_2 (5)$ by ideals of the lower central series) if and only if $x_1 \ne 0 $.  

\end{example}

\section{Classification of filiform naturally graded Lie algebras}\label{s6}
The method of successive central extensions was applied in \cite{Mill5} for the classification of naturally graded Lie algebras
 $\mathfrak{g}=\oplus_{i=1}^{n}\mathfrak{g}_{i}$ satisfying the relations
\begin{equation}
\label{3/2}
\dim {\mathfrak g}_i+ \dim {\mathfrak g}_{i+1} \le 3, i=1,\dots, n-1.
\end{equation}

A complete classification of such algebras was obtained in \cite{Mill5}. The classification list looks rather cumbersome and we refer for details to \cite{Mill5}. In this paragraph we want:
1) illustrate the possibilities of the method of successive central extensions and present a new proof of the Vergne theorem on naturally graded filiform Lie algebras and 2) show why the classification problem for naturally graded Lie algebras $\mathfrak{g}=\oplus_{i=1}^{n}\mathfrak {g}_{i}$ of width two, i.e.
$\dim{\mathfrak g}_i \le 2, i = 1, \dots, n,$ is much more complicated than the problem of classifying Lie algebras satisfying conditions (\ref {3/2}).

Before proving the Vergne Theorem, we define one necessary subgroup in the group ${\rm Aut}(\mathfrak {g})$ of all automorphisms of the naturally graded Lie algebra ${\mathfrak g}$.
\begin{definition}
An automorphism $\varphi$ of a naturally graded Lie algebra
${\mathfrak g}=\oplus_{i=1}^n{\mathfrak g}_i$ is called graded automorphism, if all homogeneous subspaces  ${\mathfrak g}_i, i=1,\dots,n,$ are invariant with respect to $\varphi$.
$$
\varphi({\mathfrak g}_i)={\mathfrak g}_i, i=1,\dots,n,
$$
\end{definition}
Note that the arbitrary automorphism $\varphi: {\mathfrak g} \to {\mathfrak g}$ of the naturally only filtration based on its grading
$$
\varphi(\oplus_{i=k}^n{\mathfrak g}_i)=\varphi({\mathfrak g}^k)=\oplus_{i=k}^n{\mathfrak g}_i,\; k=1,\dots,n.
$$
We denote the subgroup of graded automorphisms of a
naturally graded Lie algebra
${\mathfrak g}=\oplus_{i=1}^n{\mathfrak g}_i$ by the symbol ${\rm Aut}_{gr}(\mathfrak{g})$.

The classification of central extensions of arbitrary nilpotent Lie algebras was based on two key theorems \ref{main1} and \ref{main2}. The classification of naturally graded Lie algebras lies in the "graded version" of these two theorems. We give the corresponding formulations, referring to the details of their evidence to \cite{Mill5}. 
\begin{theorem}
Let ${\mathfrak g}=\oplus_{i=1}^n{\mathfrak g}_i$ be a naturally graded Lie algebra with nil-index $s-1$ and let  $\tilde c_1,\dots,\tilde c_m$ be a set of cocycles  from 
$H^2_{(s)}({\mathfrak g}, {\mathbb K})$. The Lie algebra  $\tilde {\mathfrak g}$, defined as central extension which corresponds to $\tilde c_1,\dots,\tilde c_m$  is naturally graded Lie algebra of nil-index  $s$ and $\dim{ \tilde {\mathfrak g}_s}=m$ if and only if the cocycles  $\tilde c_1,\dots,\tilde c_m$ of grading $s$ are linearly independent.
\end{theorem}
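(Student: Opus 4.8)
The plan is to mirror the proof of Theorem \ref{main1}, replacing the filtration argument by the cleaner weight grading and exploiting that a naturally graded algebra is generated in degree one. First I would fix the grading on the extension. Since each $\tilde c_l$ has weight $s$ and $\mathfrak g$ has nil-index $s-1$ (so $\mathfrak g_s=0$ and $\mathfrak g^*$ has no weight-$s$ part), I place the $m$ added generators $e^1_s,\dots,e^m_s$, with $de^l_s=\tilde c_l$, in degree $s$ and set $\tilde{\mathfrak g}_i=\mathfrak g_i$ for $i<s$ and $\tilde{\mathfrak g}_s=\langle e^1_s,\dots,e^m_s\rangle$. Because each $\tilde c_l$ vanishes on $\mathfrak g_i\wedge\mathfrak g_j$ unless $i+j=s$, the extended bracket respects this decomposition, so $\tilde{\mathfrak g}$ is positively graded regardless of any independence hypothesis. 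I would also record here that $H^2_{(s)}(\mathfrak g,\mathbb K)=Z^2_{(s)}(\mathfrak g,\mathbb K)$: there are no nonzero weight-$s$ coboundaries (they would be differentials of weight-$s$ one-forms, of which there are none), so linear independence of the classes coincides with linear independence of the representing cocycles as forms.

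Next I would reduce both natural gradedness and the dimension count to a single condition. By the Proposition characterizing natural gradings, $\tilde{\mathfrak g}$ is naturally graded if and only if $[\tilde{\mathfrak g}_1,\tilde{\mathfrak g}_i]=\tilde{\mathfrak g}_{i+1}$ for all $i$; for $i<s-1$ these equalities are inherited verbatim from $\mathfrak g$, so the only genuinely new requirement is $[\tilde{\mathfrak g}_1,\tilde{\mathfrak g}_{s-1}]=\tilde{\mathfrak g}_s$. A direct computation, using that $\tilde{\mathfrak g}_s$ is central and that $[\mathfrak g_1,\mathfrak g_{s-1}]\subset\mathfrak g_s=0$, identifies $\tilde{\mathfrak g}^s$ with the linear span of the vectors $(\tilde c_1(\xi,\eta),\dots,\tilde c_m(\xi,\eta))\in\tilde{\mathfrak g}_s$, where $\xi\in\mathfrak g_1$ and $\eta\in\mathfrak g_{s-1}$. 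Since $\tilde{\mathfrak g}^s\subset\tilde{\mathfrak g}_s$ always, the condition $\dim\tilde{\mathfrak g}^s=m$ is equivalent to $\tilde{\mathfrak g}^s=\tilde{\mathfrak g}_s$ (whence $\tilde{\mathfrak g}$ is naturally graded of nil-index $s$, as then $\tilde{\mathfrak g}^s\ne0$ and $\tilde{\mathfrak g}^{s+1}=0$), and this holds precisely when the restrictions $\tilde c_1|_{\mathfrak g_1\wedge\mathfrak g_{s-1}},\dots,\tilde c_m|_{\mathfrak g_1\wedge\mathfrak g_{s-1}}$ are linearly independent.

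The heart of the argument is then a single lemma: the restriction map $Z^2_{(s)}(\mathfrak g,\mathbb K)\to(\mathfrak g_1\wedge\mathfrak g_{s-1})^*$ is injective, i.e. a weight-$s$ cocycle $c$ vanishing on $\mathfrak g_1\wedge\mathfrak g_{s-1}$ is identically zero. I would prove this by induction on the weight split: writing $c=\sum_{a+b=s}c^{(a,b)}$ for the $\mathfrak g_a\wedge\mathfrak g_b$-components, the hypothesis kills $c^{(1,s-1)}$, and to kill $c^{(a,s-a)}$ for $a\ge2$ I use $\mathfrak g_a=[\mathfrak g_1,\mathfrak g_{a-1}]$ to write a generator as $[x,y]$ with $x\in\mathfrak g_1$, $y\in\mathfrak g_{a-1}$, and apply the cocycle identity
$$
c([x,y],w)+c([y,w],x)+c([w,x],y)=0,\qquad w\in\mathfrak g_{s-a}.
$$
Here $c([y,w],x)$ sits on $\mathfrak g_{s-1}\wedge\mathfrak g_1$ and vanishes by hypothesis, while $c([w,x],y)$ sits on $\mathfrak g_{s-a+1}\wedge\mathfrak g_{a-1}$ and vanishes by the inductive step, forcing $c^{(a,s-a)}([x,y],w)=0$ on generators, hence $c^{(a,s-a)}=0$. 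Feeding the lemma back into the reduction of the previous paragraph gives the equivalence: injectivity means independence of the restrictions is the same as independence of the classes in $H^2_{(s)}(\mathfrak g,\mathbb K)$.

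I expect the main obstacle to be exactly this inductive cocycle computation: the weight bookkeeping must be arranged so that every term produced by the cocycle identity lands in a component already shown to vanish, and the defining relation $[\mathfrak g_1,\mathfrak g_i]=\mathfrak g_{i+1}$ of the natural grading must be invoked at each step. With the lemma in hand the two directions of the theorem follow painlessly: necessity is the trivial implication that dependent classes restrict to dependent functionals on $\mathfrak g_1\wedge\mathfrak g_{s-1}$ (so $\dim\tilde{\mathfrak g}^s<m$), and sufficiency is precisely the injectivity supplied by the lemma.
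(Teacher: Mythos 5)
Your argument is correct, but it cannot be checked against a proof in this paper: for this graded theorem the authors explicitly refer the reader to \cite{Mill5} and prove only the filtered analogue, Theorem \ref{main1}. Compared with that in-paper proof, your route is genuinely different at the crucial step. The paper's argument for Theorem \ref{main1} builds a basis $e^j_i$ adapted to the annihilator filtration $L_i$, shows by an explicit $d$-closedness computation that a cocycle of filtration $s-1$ can be normalized to the form $\sum_{j,l}\alpha^n_{j,l}\,e^j_1\wedge e^l_{s-1}+(\text{terms in }\Lambda^2(L_{s-2}))$, and then runs a rank argument on the coefficient matrix $(\alpha^n_{j,l})$. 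You instead isolate a clean key lemma --- the restriction map $Z^2_{(s)}({\mathfrak g},{\mathbb K})\to({\mathfrak g}_1\wedge{\mathfrak g}_{s-1})^*$ is injective --- and prove it by induction on the weight component using the cocycle identity together with the defining property $[{\mathfrak g}_1,{\mathfrak g}_{a-1}]={\mathfrak g}_a$ of a natural grading; your preliminary observation that $H^2_{(s)}=Z^2_{(s)}$ (no weight-$s$ coboundaries since ${\mathfrak g}_s=0$) correctly disposes of the cohomology-versus-cocycle ambiguity, which in the filtered setting requires more care. Your inductive bookkeeping checks out: in the identity $c([x,y],w)+c([y,w],x)+c([w,x],y)=0$ the second term lands in ${\mathfrak g}_1\wedge{\mathfrak g}_{s-1}$ and the third in ${\mathfrak g}_{a-1}\wedge{\mathfrak g}_{s-a+1}$, both already known to vanish. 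What your approach buys is a coordinate-free argument that avoids the basis normalization entirely; what it costs is that it leans on weight homogeneity and on ${\mathfrak g}_a=[{\mathfrak g}_1,{\mathfrak g}_{a-1}]$, so it does not directly generalize to the filtered Theorem \ref{main1}, where the paper's heavier basis manipulation is doing real work. The only point you should make explicit is that $\tilde{\mathfrak g}^{s-1}$ projects onto ${\mathfrak g}^{s-1}={\mathfrak g}_{s-1}$ under $\tilde{\mathfrak g}\to{\mathfrak g}$ (the lower central series is preserved by central quotients), which is what justifies identifying $\tilde{\mathfrak g}^s$ with $[\tilde{\mathfrak g}_1,{\mathfrak g}_{s-1}]_{\tilde{\mathfrak g}}$ and hence with the span of the vectors $\bigl(\tilde c_1(\xi,\eta),\dots,\tilde c_m(\xi,\eta)\bigr)$.
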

\begin{theorem}
Let $\{ \tilde c_1,{\dots}, \tilde c_m \}$ and $\{ \tilde c_1',{\dots}, \tilde c_m'\}$ -- be two sets of cocycles with natural grading  $s$  in $H^2 _{(s)}({\mathfrak g}, {\mathbb K})$. 
They define isomorphic central extensions  $\tilde {\mathfrak g}$ and  $\tilde {\mathfrak g}'$ if and only if the linear spans  $\langle \tilde c_1,{\dots}, \tilde c_m \rangle$ are $\langle \tilde c_1',{\dots}, \tilde c_m'\rangle$ in the same orbit of linear action of the automorphism group  ${\rm Aut}_{gr}({\mathfrak g})$  on the subspace  $H^2_{(s)}({\mathfrak g}, {\mathbb K})$.   
\end{theorem}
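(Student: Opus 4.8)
The plan is to mirror the proof of Theorem \ref{main2} verbatim, inserting at each step the observation that all the data are now compatible with the natural grading. The essential new ingredient is that a graded automorphism $\Phi \in {\rm Aut}_{gr}({\mathfrak g})$ preserves each homogeneous subspace ${\mathfrak g}_i$, so its conjugate action $\Phi^*$ preserves each weight subspace $\Lambda^2_{(\lambda)}({\mathfrak g}^*)$ and therefore restricts to a well-defined linear action on $H^2_{(s)}({\mathfrak g}, {\mathbb K})$. This is precisely what makes the orbit statement on the single homogeneous piece $H^2_{(s)}$ meaningful, and it is the reason one must pass from ${\rm Aut}({\mathfrak g})$ to the subgroup ${\rm Aut}_{gr}({\mathfrak g})$: an arbitrary automorphism only preserves the filtration based on the grading, as noted above, and would in general mix weights.

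For the implication \emph{same orbit $\Rightarrow$ graded isomorphism} I would start from $\Phi(L') = L$ with $\Phi \in {\rm Aut}_{gr}({\mathfrak g})$ and copy the construction from Theorem \ref{main2}. The forms $\Phi \cdot \tilde c_1', \dots, \Phi \cdot \tilde c_m'$ again form a basis of $L$, now a basis of weight-$s$ cocycles, so there is a linear map $\psi \colon L \to L$ with $\psi(\Phi \cdot \tilde c_i') = \tilde c_i$, and I set $f((a,g)) = (\Psi(a), \Phi(g))$ with $\Psi(a) = \psi(\Phi \cdot a)$. The bracket-compatibility computation is identical to the one displayed in Theorem \ref{main2}. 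The one extra point is to check that $f$ is a \emph{graded} isomorphism: by the graded version of Theorem \ref{main1} the new top component $\tilde{\mathfrak g}_s$ is spanned by the vectors dual to $\tilde c_1, \dots, \tilde c_m$ (and similarly for $(\tilde{\mathfrak g}')_s$), so $\Psi$ carries $(\tilde{\mathfrak g}')_s$ onto $\tilde{\mathfrak g}_s$, while $\Phi$ carries each ${\mathfrak g}_i$ onto itself; hence $f$ maps $(\tilde{\mathfrak g}')_i$ onto $\tilde{\mathfrak g}_i$ for every $i$, i.e. $f$ respects the natural grading.

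For the converse, a graded isomorphism $f \colon \tilde{\mathfrak g}' \to \tilde{\mathfrak g}$ automatically sends the top component $(\tilde{\mathfrak g}')_s$ onto $\tilde{\mathfrak g}_s$, these being the highest nonzero pieces of the natural grading, which coincide with the last nonzero ideals of the lower central series and lie in the center. This yields the commutative diagram (\ref{isom}) with $\Psi = f|_{(\tilde{\mathfrak g}')_s}$ and an induced graded automorphism $\Phi$ of ${\mathfrak g}$; feeding this into the graded analogue of Lemma \ref{main_propos} produces a nondegenerate matrix $A_{\Psi}$ with $(\tilde c_1, \dots, \tilde c_m) A_{\Psi} = (\Phi^*(\tilde c_1'), \dots, \Phi^*(\tilde c_m'))$, which says exactly that $\Phi(L') = L$, so the two spans lie in a single ${\rm Aut}_{gr}({\mathfrak g})$-orbit inside $H^2_{(s)}({\mathfrak g}, {\mathbb K})$.

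The step I expect to be the main obstacle is the verification that the grading is genuinely preserved in both directions, concretely that in the converse one may take the abstract isomorphism to respect the natural grading. This is where one uses that the natural grading is recovered from the lower-central-series filtration, so that $f$ necessarily maps $(\tilde{\mathfrak g}')^k$ to $\tilde{\mathfrak g}^k$ and a compatible choice of graded complements can be arranged; once this grading compatibility is secured, every remaining computation is the word-for-word transcription of the proofs of Theorem \ref{main1}, Theorem \ref{main2} and Lemma \ref{main_propos}, restricted throughout to the weight-$s$ homogeneous subspace.
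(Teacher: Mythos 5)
The paper does not actually prove this theorem: it states the graded versions of Theorems \ref{main1} and \ref{main2} and explicitly refers the reader to \cite{Mill5} for the details, so there is no in-text proof to compare against. Your proposal does the natural thing, namely transplants the proof of Theorem \ref{main2} (and, for the converse, Lemma \ref{main_propos}) to the weight-$s$ homogeneous subspace, and the direction ``same ${\rm Aut}_{gr}$-orbit $\Rightarrow$ isomorphic extensions'' goes through word for word, since $\Phi^*$ for $\Phi\in{\rm Aut}_{gr}({\mathfrak g})$ preserves $\Lambda^2_{(s)}({\mathfrak g}^*)$ and the bracket computation is unchanged. The only place where you are waving rather than proving is the converse: an abstract isomorphism $f\colon\tilde{\mathfrak g}'\to\tilde{\mathfrak g}$ induces an automorphism $\Phi$ of ${\mathfrak g}$ that is a priori only filtration-preserving, not graded, so the matrix identity of Lemma \ref{main_propos} relates the spans via a non-graded $\Phi^*$. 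To land in an ${\rm Aut}_{gr}$-orbit inside $H^2_{(s)}$ you must replace $\Phi$ by its associated graded part $\Phi_{gr}$ and check that this does not change anything: writing $\Phi(x)\in\Phi_{gr}(x)+F^{i+1}{\mathfrak g}$ for $x\in{\mathfrak g}_i$, one sees that for a cocycle $c$ of pure weight $s$ the difference $\Phi^*c-\Phi_{gr}^*c$ vanishes on $\Lambda^2_{(s)}{\mathfrak g}$ and hence lies in $\tilde F^{s-1}\Lambda^2({\mathfrak g}^*)$, so the weight-$s$ components of the two spans are indeed related by $\Phi_{gr}\in{\rm Aut}_{gr}({\mathfrak g})$. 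With that one computation supplied, your argument is complete and is exactly in the spirit of the paper's treatment of the ungraded case.
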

As we see, in these theorems we speak of a natural graduation with respect to filtration by the ideals of the lower central series and reduce the group ${\rm Aut} ({\mathfrak g})$ of all automorphisms to its subgroup 
${\rm Aut}_{gr}({\mathfrak g})$. 
\begin{theorem}[Vergne \cite{V}]
\label{V_ne1}
Let $\mathfrak{g}=\oplus_{i=1}^{n{-}1}\mathfrak{g}_{i}$ be a naturally graded filiform Lie algebra. Then  

1) if $n=2k+1$ then $\mathfrak{g} \cong \mathfrak{m}_0(2k)$; 

2) if $n=4$ then $\mathfrak{g}$ is isomorphic to 
$\mathfrak{m}_0(3)$;

3) if $n=2k \ge 6$ then the Lie algebra $\mathfrak{g}$ is isomorphic either to
$\mathfrak{m}_0(2k-1)$ or to the Lie algebra  $\mathfrak{m}_1(2k-1)$ 
defined by its basis
$e_1, \dots, e_{2k}$ 
and structure relations
$$
[e_1, e_i ]=e_{i+1}, \; i=2, \dots, 2k{-}1; \quad \quad
[e_j, e_{2k{+}1{-}j} ]=(-1)^{j{+}1}e_{2k}, \quad j=2, \dots, k.
$$
\end{theorem}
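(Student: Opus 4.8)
The plan is to argue by induction on the dimension $n$, feeding the two graded theorems stated above (the natural-grading analogues of Theorems~\ref{main1} and~\ref{main2}) with an explicit top-weight cohomology computation. First I would record the structural normalization: since $\mathfrak g=\oplus_{i=1}^{n-1}\mathfrak g_i$ is filiform and naturally graded, the relation $[\mathfrak g_1,\mathfrak g_i]=\mathfrak g_{i+1}$ forces $\dim\mathfrak g_1=2$ and $\dim\mathfrak g_i=1$ for $2\le i\le n-1$. In particular the top component $\mathfrak g_{n-1}=\mathfrak g^{n-1}$ is one-dimensional, so $\mathfrak g$ is a one-dimensional ($m=1$) central extension of the naturally graded filiform algebra $\mathfrak g'=\mathfrak g/\mathfrak g_{n-1}$ of dimension $n-1$, determined by a nonzero class in $H^2_{(n-1)}(\mathfrak g',\mathbb K)$. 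The graded version of Theorem~\ref{main1} conversely guarantees that any such class yields a naturally graded filiform extension of nil-index $n-1$, and the graded version of Theorem~\ref{main2} reduces the classification of these extensions to the orbits of $\mathbb K^{*}\times {\rm Aut}_{gr}(\mathfrak g')$ on the nonzero classes of $H^2_{(n-1)}(\mathfrak g')$. By the inductive hypothesis $\mathfrak g'$ is already on the list, so it suffices to analyze the candidate parents $\mathfrak m_0(n-2)$ and (in even dimension) $\mathfrak m_1(n-2)$.

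The central computation is the top-weight cohomology of the model algebra, extracted directly from the cochain complex of Example~\ref{cohomol_m_0_n}. One finds that $H^2_{(n-1)}(\mathfrak m_0(n-2))$ always contains the ``model'' class $a^1\wedge a^{n-2}$, whose extension reconstructs $\mathfrak m_0(n-1)$, and that it contains in addition the Vergne class $c_{n-1}$ of Example~\ref{cohomol_m_0_n} precisely when the top weight $n-1$ is odd, i.e.\ when $n$ is even; the extension by $c_{n-1}$ reconstructs $\mathfrak m_1(n-1)$. A direct inspection of the weight-$(n-1)$ closed forms shows the kernel of $d$ is exactly one-dimensional for $n$ odd and exactly two-dimensional for $n$ even. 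In the odd case the projectivized space is a single point, so the extension is unique and equals $\mathfrak m_0(n-1)=\mathfrak m_0(2k)$, which is case (1).

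Next I would dispose of the second branch via the ``no progeny'' phenomenon for $\mathfrak m_1$. A filiform extension of $\mathfrak m_1(2m-1)$ would have to come from the model form $e^1\wedge e^{2m}$ of weight $2m$; but a one-line computation of $d(e^1\wedge e^{2m})$ using the extra structure relations $[e_k,e_{2m-k+1}]=(-1)^{k+1}e_{2m}$ shows this form is not closed, and no Vergne-type (odd-weight) class of the required even weight $2m$ exists, so $H^2_{(2m)}(\mathfrak m_1(2m-1))=0$. Hence $\mathfrak m_1$ admits no naturally graded filiform central extension and contributes no descendant; this is exactly what keeps the odd-dimensional classification single-valued.

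The remaining and hardest step is the orbit analysis in even dimension, which separates case (2) from case (3). Computing ${\rm Aut}_{gr}(\mathfrak m_0(n-2))$ directly, I would show that for $n-2\ge 3$ a graded automorphism acts on $\mathfrak g_1=\langle e_1,e_2\rangle$ by a lower-triangular matrix $e_1\mapsto p e_1+q e_2,\ e_2\mapsto t e_2$ (the entry sending $e_2$ into $e_1$ must vanish, since $e_2$ commutes with $\mathfrak g^2=[\mathfrak g,\mathfrak g]$ whereas $e_1$ does not), and that the induced action on the plane $\langle a^1\wedge a^{n-2},\,c_{n-1}\rangle$ is triangular with nonzero diagonal: $a^1\wedge a^{n-2}\mapsto p^{\,n-2}t\,(a^1\wedge a^{n-2})$ and $c_{n-1}\mapsto p^{\,n-3}t^2\,c_{n-1}-q\,p^{\,n-3}t\,(a^1\wedge a^{n-2})$. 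On $\mathbb KP^1$ this reads $[x:y]\mapsto[px-qy:ty]$, which has exactly two orbits, the fixed point $[1:0]$ and its complement, yielding precisely the two algebras $\mathfrak m_0(n-1)$ and $\mathfrak m_1(n-1)$ of case (3). The exceptional case $n=4$ is where $\mathfrak g'=\mathfrak m_0(2)=\mathfrak h_3$ carries the full ${\rm Aut}_{gr}=GL_2$; as in Example~\ref{Heisenberg} this acts transitively on $\mathbb KP^1=\mathbb P(H^2_{(3)})$, collapsing the two orbits to one and leaving only $\mathfrak m_0(3)$, which is case (2). The main obstacle is precisely to pin down this orbit count uniformly -- to verify that the automorphism action is genuinely triangular with nonvanishing diagonal for $n-2\ge 4$ while degenerating to a transitive action exactly for $\mathfrak h_3$ -- since this single dichotomy is what produces the three separate cases of the theorem.
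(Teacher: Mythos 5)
Your proposal is correct and follows essentially the same route as the paper: induction on dimension via the quotient by the top graded component, the graded analogues of Theorems \ref{main1} and \ref{main2}, the parity computation of the top-weight space $H^2_{(n-1)}$ (one class in even weight for $\mathfrak m_0$, two in odd weight, none for $\mathfrak m_1$, which therefore has no progeny), and the two-orbit count for the triangular ${\rm Aut}_{gr}$-action on ${\mathbb K}P^1$ versus the transitive $GL_2$-action in the exceptional case of ${\mathfrak h}_3$. The only cosmetic difference is that you isolate $n=4$ inside the orbit analysis, whereas the paper absorbs it into the base of the induction via Example \ref{Heisenberg}.
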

\begin{proof}
We present a new proof of the Vergne theorem using the method of successive central extensions proposed in \cite{Mill2, Mill4}, later also used in \cite{BKT} to prove the conjecture of one of the authors of this article related to the Fialowski classification \cite{Fial1} of graded Lie algebras generated by two elements. The application of the method of successive central extensions in this work is based on one elementary observation.
$
\tilde{\mathfrak{g}}=\oplus_{i{=}1}^k{\mathfrak g}_i, \; {\mathfrak g}_k \ne 0.
$ 
According to the definition of a filiform Lie algebra, the dimension $\dim {\mathfrak g}=k+1$ and hence  $\dim {\mathfrak g}_1=2$ and $\dim {\mathfrak g}_ i=1$ for $2\le i \le k$.
Its last one-dimensional homogeneous addend ${\mathfrak g}_k$ coincides with the one-dimensional center (this follows from filiform property)
${\mathfrak g}_k=Z(\tilde {\mathfrak g})$.

Consider the quotient-algebra ${\mathfrak g}=\tilde{\mathfrak {g}}/{\mathfrak g}_k$. It is easy to see that it is also a naturally graded filiform Lie algebra and, as a vector space, coincides with
the direct sum of ${\mathfrak g}=\oplus_{i{=}1}^{k{-}1}{\mathfrak g}_i$.
Thus, the central extension is defined.
$$
0 \to{\mathfrak g}_k \to  \tilde{\mathfrak g}  \to {\mathfrak g} \to 0, 
$$
which corresponds to some cocycle $\tilde c$  in 
$H^2({\mathfrak g}, {\mathbb K})$. Important remark: the two-dimensional cocycle $\tilde c$ is not arbitrary, it has weight $k$ (in natural grading): $\tilde c \in H^2_{(k)}({\mathfrak g}, {\mathbb K})$.

To prove the Vergne theorem will be induction on the dimension of Lie algebras.
The basis of induction: the Lie algebra $\mathfrak{m}_0 (1)$ is a two-dimensional abelian Lie algebra with a base $e_1,e_2$ of two elements, each of which has a weight equal to one. Its unique cocycle is the form $e^1\wedge e^2$ of weight two. The one-dimensional central extension constructed from this cocycle defines a three-dimensional naturally graded filiform Lie algebra $\mathfrak {m}_0 (2)$. The next step was already analyzed in the example \ref{Heisenberg}. The corresponding central extension gave the Lie algebra $\mathfrak{m}_0 (3)$.

Let the theorem be proved for algebras of all
dimensions of $\le n$. Case a) $ n = 2m $. According to the inductive hypothesis in dimension
$2m$, up to isomorphism, is exactly two naturally graded filiform Lie algebras: $ \mathfrak {m}_0(2m-1)$ and $\mathfrak{m}_1(2m-1)$.
We find in each of these Lie algebras $2$-cocycles of weight $2m$.
In the algebra $\mathfrak {m}_0 (2m-1)$, up to multiplication by a scalar, there will be a unique cocycle of weight $2m + 1$ this is $e^1\wedge e^{2m}$ (recall that the $1$-form $e^{2m}$ has weight $2m-1$ in natural grading). In the Lie algebra $\mathfrak{m}_1(2m-1)$ cocycles of weight $2m$ simply does not exist. In this sense, the Lie algebra $\mathfrak {m}_1(2m-1)$ can not be extended to a naturally graded filiform Lie algebra of higher dimension.

The case b) $n=2m+1$. According to the inductive hypothesis, we have exactly one $(2m+1)$-dimensional naturally graded filiform Lie algebra and it is the Lie algebra $\mathfrak {m}_0(2m)$.
Its cohomology subspace $H^2_{(2m+1)}(\mathfrak {m}_0(2m), {\mathbb K})$ of weight $2m+1$ will be two-dimensional. Its basis, for instance, can be chosen as follows: $e^1\wedge e^{2m+1}$ and $\sum_{i=2}^m(-1)^ie^i \wedge e^{2m + 3-i} $. Recall that exact forms of weight $2m+1$ do not exist. What is the structure of  the automorphism group ${\rm Aut}(\mathfrak{m}_0(2m))$? It is easy to verify the following formulas
for the action $\varphi^*$ of an arbitrary automorphism
$\varphi: {\mathfrak m}_0(n) \to  {\mathfrak m}_0(n)$ on the dual space  $\mathfrak{m}_0(2m)^*$ for $n \ge 4$ ($\alpha \ne 0, \mu \ne 0, \beta \in {\mathbb K}$):
\begin{equation}
\label{m_0_automorphism}
\begin{array}{c}
\varphi^*(e^{1})=\alpha e^{1},
\varphi^*(e^2)=\beta e^{1} +\mu e^2,\\
\varphi^*(e^{j})=\alpha^{j{-}1} \mu e^{j}+\sum_{i=j+1}^n \gamma_{j,i}e^i,  2 \le j \le n-1. 
\end{array}
\end{equation}
We are interested in automorphisms that preserve the invariant subspace of $2$-forms of weight $2m+1$. Consider the subgroup ${\rm Aut}_ {gr}(\mathfrak {m}_0(2m))$ automorphisms of the form (\ref{m_0_automorphism}) with all $ \gamma_{j, i}=0$. The matrix $A_ {\varphi^*}$ of the action of such an automorphism $\varphi$ on the invariant subspace $H^2_{(2m + 1)}(\mathfrak {m} _0(2m), {\mathbb K}) $ will be triangular
 $$
\alpha^{2m{-}1}\mu\begin{pmatrix} \alpha & \beta \\ 0 & \mu\end{pmatrix}, 
\; \alpha, \mu \ne 0.
$$
Now we recall that we still have the group $GL_1={\mathbb K}^*$ on the cohomology subspace $H^2_{(2m+1)}(\mathfrak {m}_0(2m), {\mathbb K })$ weights $2m + 1$. The actions $GL_1$ and ${\rm Aut}_ {gr}(\mathfrak{m}_0(2m)) $ commute and we can first take the quotient of
the subspace $H^2_{(2m+1)}(\mathfrak {m}_0(2m), {\mathbb K})$ by the action $ GL_1$, and then consider the action ${\rm Aut}_{gr}(\mathfrak {m}_0(2m))$ on the corresponding quotient
${\mathbb P}H^2_{(2m+1)}(\mathfrak {m}_0(2m), {\mathbb K})= {\mathbb K}P^1$.
It is easy to see that the corresponding projective action
has exactly two orbits represented by the points $ (1: 0) $ and $ (1: 1) $. These points of the projective line correspond to one-dimensional central extensions $ {\mathfrak m}_0 (2m{+}1)$ and $ {\mathfrak m}_1(2m{+}1) $, respectively.
\end{proof}

Let us analyze our proof. What helped us in the calculations? Answer: 1) the small dimension of the subspaces $H^2_{(k)}(\mathfrak {g}, {\mathbb K}) $ and 2) the small dimension of the automorphism subgroup ${\rm Aut}_{gr}(\mathfrak {g})$. These two circumstances led to the fact that at each step we have the orbit space of the action $GL_1\times {\rm Aut}_{gr} (\mathfrak {g})$ on the homogeneous subspace $ H^2_{(k)}(\mathfrak {g}, {\mathbb K})$ was not just finite, but consisted of no more than two orbits.

In the papers \cite{Mill1, Mill2}, it was shown that the classification problem for $ \mathbb N$-graded filiform Lie algebras ${\mathfrak g}=\oplus_{i=1}^k{\mathfrak g} _i$, which all homogeneous components of $ {\mathfrak g}_i $ are one-dimensional, can be solved by an inductive process of successive one-dimensional central extensions. Later, the same idea was applied to the classification of another class of
$\mathbb N $-graded Lie algebras ${\mathfrak g} = {\mathfrak g}_1\oplus {\mathfrak g}_3 \oplus {\mathfrak g}_4 \oplus \dots$ with one lacuna ${\mathfrak g}_2 = 0 $ in the grading \cite{BKT}.

\section{Elementary orbital geometry of a group action $Aut({\mathfrak g})$ on the Grassmannian $Gr (m,H^2({\mathfrak g}, {\mathbb K}))$}\label{s7}

In this section we explain why the classification problem for naturally graded Lie algebras $\mathfrak {g}=\oplus_{i = 1}^{n}\mathfrak {g}_{i}$ of width two, i.e.
$\dim{\mathfrak g} _i \le 2, i = 1, \dots, n$, is fundamentally more difficult than the analogous problem for naturally graded Lie algebras of "width $ {\frac{3}{2}}$" from \cite{Mill5}. Namely, we show that already in small dimensions there exist parametric families of pairwise nonisomorphic naturally graded Lie algebras of width two. On the other hand, the study of the orbits of a torus on the $Gr(2, {\mathbb C}^4)$ Grassmannian is a very nontrivial problem, as was shown in \cite {BuTer}.

To build the required example, first consider
the ${\mathcal L}(m,n)={\mathcal L}(m)/{\mathcal L}(m)^{n+1}$ quotient Lie algebra of the free Lie algebra ${\mathcal L}(m)$ from $m$ generators with respect to its ideal ${\mathcal L}(m)^{n+1}$ of
lower central series. Such a Lie algebra is often called the free nilpotent Lie algebra of $m$ generators of  degree of nilpotency $n$. Obviously, ${\mathcal L}(m, n)$ is a naturally graded nilpotent Lie algebra.

\begin{lemma}[\cite{Mill5}]
\label{vazhnaya_lemma}
Consider the free nilpotent Lie algebra ${\mathcal L}(2,3)$ of two generators of the nilpotency degree $3$.
The orbit space of the action $GL (1, {\mathbb K}) \times {\rm Aut} ({\mathcal L} (2,3))$ on the three-dimensional space $H^2({\mathcal L}(2,3), {\mathbb K}) $ consists of:

a) of four orbits, two of which are open, in the case of a real field ${\mathbb K} = {\mathbb R}$;

b) of three orbits, one of which is open, in the complex case
${\mathbb K}={\mathbb C}$.
\end{lemma}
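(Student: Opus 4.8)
The plan is to realise the three-dimensional space $H^2(\mathcal{L}(2,3),{\mathbb K})$ explicitly and to recognise the induced $\mathrm{Aut}$-action as the classical action of $GL_2$ on binary quadratic forms. First I would fix the standard basis of $\mathcal{L}(2,3)$: generators $e_1,e_2$ of weight $1$, the bracket $e_3=[e_1,e_2]$ of weight $2$, and $e_4=[e_1,e_3]$, $e_5=[e_2,e_3]$ of weight $3$, so that the only nonzero relations are $[e_1,e_2]=e_3$, $[e_1,e_3]=e_4$, $[e_2,e_3]=e_5$ and $\dim\mathcal{L}(2,3)=5$. In the Chevalley--Eilenberg complex this gives $de^3=e^1\wedge e^2$, $de^4=e^1\wedge e^3$, $de^5=e^2\wedge e^3$, and I would compute $d$ on the ten basis monomials of $\Lambda^2$. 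A direct check shows the coboundaries are exactly $\langle e^1\wedge e^2,\,e^1\wedge e^3,\,e^2\wedge e^3\rangle$ and that the nontrivial cohomology is concentrated in weight $4$, spanned by the classes $\omega_1=[e^1\wedge e^4]$, $\omega_2=[e^2\wedge e^5]$, $\omega_3=[e^1\wedge e^5+e^2\wedge e^4]$; in particular $\dim H^2=3$, confirming the stated dimension.

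Next I would compute the action of $\mathrm{Aut}(\mathcal{L}(2,3))$ on this space. Since the algebra is free nilpotent on two generators, every automorphism is determined by its action $g=\bigl(\begin{smallmatrix}a&b\\c&d\end{smallmatrix}\bigr)\in GL_2$ on the generating plane $\langle e_1,e_2\rangle$ together with a unipotent (non-graded) part. I would first verify on a representative generator, e.g. $\varphi(e_1)=e_1+e_3$, that the unipotent part acts trivially on $H^2$ modulo coboundaries; this reflects the fact that $H^2$ sits in a single weight, so only the reductive graded quotient $\mathrm{Aut}_{gr}=GL_2$ acts nontrivially. For a graded $g$ one computes $\varphi(e_3)=(\det g)\,e_3$ and $\varphi(e_4),\varphi(e_5)=(\det g)\,g\cdot(e_4,e_5)$, and passing to the dual action on $\omega_1,\omega_2,\omega_3$ yields, up to a harmless rescaling of $\omega_3$, the representation $g\mapsto(\det g)\,\mathrm{Sym}^2(g)$. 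Thus, after also dividing by the $GL_1={\mathbb K}^*$ scaling, the problem becomes the classification of binary quadratic forms $x_1\omega_1+x_2\omega_2+x_3\omega_3\leftrightarrow x_1u^2+x_2v^2+x_3uv$ under $GL_2$-substitution together with arbitrary rescaling.

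Finally I would invoke the classical orbit classification of binary quadratic forms by the rank (equivalently the discriminant) of the associated symmetric matrix, refined over ${\mathbb R}$ by signature. The orbits are: the zero form; the rank-one forms, i.e. nonzero perfect squares $\pm\ell^2$ (a single orbit, since such a form is determined up to the available scaling); and the nondegenerate forms. Over ${\mathbb C}$ the latter constitute one open orbit, giving three orbits with one open. Over ${\mathbb R}$ the nondegenerate forms split by signature into the definite and the indefinite loci; the determinant twist (an element with $\det g<0$, e.g. $\mathrm{diag}(-1,1)$, sends a positive-definite form to a negative-definite one) merges the two definite classes, leaving exactly two open orbits, definite and indefinite, and four orbits in total.

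The delicate points will be, first, confirming that the non-graded automorphisms genuinely act trivially on $H^2$, so that the action really reduces to $GL_2$; and second, the real count: one must keep careful track of how the determinant-twisted scaling identifies the two definite signatures, so as to obtain two open orbits over ${\mathbb R}$ rather than three, while over ${\mathbb C}$ the single nondegenerate orbit is open.
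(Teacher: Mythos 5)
Your proposal is correct and follows essentially the same route as the paper: compute $H^2({\mathcal L}(2,3),{\mathbb K})$ (three-dimensional, concentrated in weight $4$), check that the non-graded part of an automorphism acts trivially modulo coboundaries, and recognise the induced action as $\det(g)\cdot\mathrm{Sym}^2(g)$ of $GL_2$. Your final orbit count via rank and signature of binary quadratic forms is just the algebraic restatement of the paper's projective picture, where the rank-one locus is the conic $\{(\rho^2{:}\mu\rho{:}\mu^2)\}$ (the parabola $y^2=x$) and the definite/indefinite loci are its interior and exterior in ${\mathbb R}P^2$.
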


\begin{proof}
The cochain complex of the Lie algebra ${\mathcal L}(2,3)$ is given by generators 
$a^1, b^1, a^2, a^3, b^3,$ and formulas for the differential $d$
\begin{equation}
\begin{split}
da^1=db^1=0, \;\;  da^2=a^1\wedge b^1, \\
 da^3=a^1 \wedge a^2, \;\;  db^3=b^1 \wedge a^2;
\end{split}
\end{equation}
The automorphism group ${\rm Aut}({\mathcal L}(2,3))$ acts on the generators $a^1, b^1, a^2, a^3, b^3$ of the $\Lambda^*({\mathcal L}(2,3))$  according to the formulas
\begin{equation}
\label{automorphism_2_3}
\begin{array}{c}
\varphi^*(a^1)=\alpha_1 a^1 + \rho_1 b^1, \varphi^*(b^1)=\beta_1 a^1 + \mu_1 b^1,\\
\varphi^*(a^2)=(\alpha_1\mu_1-\beta_1\rho_1) a^2+\alpha_2 a^1+\rho_2 b^1,\\
 \varphi^*(a^3)=(\alpha_1\mu_1-\beta_1\rho_1)( \alpha_1a^3+\rho_1b^3)+(\alpha_1\rho_2-\alpha_2\rho_1)a^2+\alpha_3 a^1+\rho_3 b^1,\\
 \varphi^*(b^3)=(\alpha_1\mu_1-\beta_1\rho_1)( \beta_1a^3+\mu_1b^3)+(\alpha_1\mu_2-\beta_2\rho_1)a^2+\beta_3 a^1+\mu_3 b^1. 
\end{array}.
\end{equation}
Choose the following basis of three-dimensional space $H^2({\mathcal L}(2,3),{\mathbb K})$:
$$
a^1\wedge a^3, \; a^1\wedge b^3+b^1\wedge a^3, \; b^1\wedge b^3.
$$
Fixing the corresponding coordinates in the space $H^2({\mathcal L}(2,3),{\mathbb K})$, we get explicit formulas
for the action $\varphi^*$ for an arbitrary $\varphi \in {\rm Aut}({\mathcal L}(2,3))$.
\begin{equation}
\label{action_L_23}
\varphi^*=
(\alpha_1\mu_1-\beta_1\rho_1)\cdot 
\begin{pmatrix}
\alpha_1^2 & 2\rho_1\alpha_1 &\rho_1^2\\
\alpha_1\beta_1 & \rho_1\beta_1{+}\alpha_1 \mu_1& \mu_1\rho_1\\
\beta_1^2 & 2\mu_1\beta_1& \mu_1^2
\end{pmatrix}.
\end{equation}

The single point orbit of the zero cohomology class will correspond to the trivial central extension ${\mathcal L}(2,3)\oplus {\mathbb K}$.
\end{proof}

Taking the quotient of the set of nonzero cohomology classes from
$H^2({\mathcal L}(2,3),{\mathbb K})$ by the action $GL_1$ we get
projective softness of ${\mathbb K}P^2$, on which the group acts
  ${\rm Aut} ({\mathcal L}(2,3))$ by the formula \ref{action_L_23}. In the real case, the orbit of the point $ (0: 0: 1) $ is the oval $ \{(\rho^2{:} \mu\rho {:} \mu^2) \} \subset {\mathbb R}P^2$, which is represented by the parabola $y^2=x$ in the standard affine map $x_3 \ne 0 $ with coordinates $x= \frac{x_1}{x_3}, y=\frac {x_2}{x_3}$.

\begin{tikzpicture}
\draw[->] (-2.5,0)--(3,0) node[anchor=north] {$x$}; 
\draw[->] (0,-2)--(0,2.5) node[anchor=east] {$y$}; 
%\draw   (0,0) parabola 
\draw [rotate=-90][very thick] (-1.5,2.25) parabola bend (0,0)(1.5,2.25);
%\shade[top color=gray,bottom color=white!50] 
       [rotate=-90] (-1.5,2.25) parabola bend (0,0)(1.5,2.25);
\put(16,-0.9){$\bullet$};
\put(-17.5,-0.9){$\bullet$};
\put(-0.9,-0.9){$\bullet$};
\put(12.5,2){$(1{:}0{:}1)$};
\put(-10,-6){$(0{:}0{:}1)$};
\put(-22,2){$({-}1{:}0{:}1)$};
\put(14,20){$x{=}y^2$};
\put(30,10){$\{(\rho^2{:}\mu\rho{:}\mu^2)\} \subset {\mathbb R}P^2$};
\put(45,5){$\mu \neq 0$};
%\put(15,30){${\rm Aut}_{gr}({\mathcal L}(2,3)){=}GL(2,{\mathbb R})$} 
\end{tikzpicture}

The action of the group ${\rm Aut} ({\mathcal L}(2,3))$ (actually the action of $ GL_2$) on the projective plane ${\mathbb R}P^2$ has two more orbits, they coincide with the inner and outer regions of the parabola $y^2=x$ on the affine map $x_3 \ne 0$. These two orbits can be represented by the points
$({-}1:0:1)$ and $(1:0:1)$. Remark that if ${\mathbb K}={\mathbb  C}$ then $({-}1:0:1)$ and $(1:0:1)$ will be in the same orbit of the action ${\rm Aut} ({\mathcal L}(2,3))$
$$
({-}1:0:1) \in \{ (\alpha^2+\rho^2, \alpha\beta+\mu\rho, \beta^2+\mu^2)\}, \;  \mu=1, \beta=\rho=0, \alpha^2= {-}1.
$$
Consider now a new Lie algebra $\tilde {\mathcal L}(2,4)$. 
\begin{definition}
The naturally graded Lie algebra $\tilde {\mathcal L}(2,4)$ is defined by the basis and commutation relations
$$
\begin{array}{c}
\tilde {\mathcal L}(2,4)=\langle a_1, b_1\rangle \oplus \langle a_2 \rangle
\oplus \langle a_3, b_3\rangle \oplus \langle a_4, b_4\rangle,\\

[a_1,b_1]=a_2, [a_1,a_2]=a_3, [b_1,a_2]=b_3, [a_1,a_3]=a_4, [b_1,b_3]=b_4, [a_1,b_3]=[b_1,a_3]=0.
\end{array}
$$
\end{definition}
The Lie algebra $\tilde{\mathcal L}(2,4)$ is a two-dimensional central extension of the free nilpotent algebra ${\mathcal L}(2,3) $ given by a set of two cocycles
 $$
(c_1, c_2)=(a^1\wedge a^3, b^1\wedge b^3).
$$
The elements of the basis $a_i, b_i$ of the Lie algebra $ \tilde{\mathcal L}(2,4)$ are
homogeneous elements of weight $i$: the subscript of each basic element coincides with its weight.

The cochain complex $\Lambda^*(\tilde {\mathcal L}(2,4)^*)$ can be defined  by generators $a^1, b^1, a^2, a^3, b^3, a^4, b^4$  and structure formulas for the differential $d$
\begin{equation}
\begin{split}
da^1=db^1=0, \;\;  da^2=a^1\wedge b^1, \\
 da^3=a^1\wedge a^2, \;\;  db^3=b^1\wedge a^2,\\
 da^4=a^1\wedge a^3, \;\;  db^4=b^1\wedge b^3,
\end{split}
\end{equation}
\begin{lemma}
The two-dimensional cohomology of $H^2(\tilde {\mathcal L}(2,4),{\mathbb K})$ is the direct sum of two nontrivial homogeneous subspaces
$$
H^2(\tilde {\mathcal L}(2,4),{\mathbb K})=H^2_{(5)}(\tilde {\mathcal L}(2,4),{\mathbb K})\oplus H^2_{(4)}(\tilde {\mathcal L}(2,4),{\mathbb K})
$$
The subspace $H^2_{(5)}(\tilde {\mathcal L}(2,4),{\mathbb K})$ of weight $5$ is four-dimensional and can be defined as the linear span of the following basic cocycles
\begin{equation}
\label{cohomology_basis}
a^1\wedge a^4, \; a^1\wedge b^4+a^2\wedge b^3, \; b^1\wedge a^4-a^2\wedge a^3, \;b^1\wedge b^4.
\end{equation} 
And the one-dimensional homogeneous subspace $H^2_{(4)}(\tilde {\mathcal L}(2,4),{\mathbb K})$ is spanned by the cocycle of weight $4$
$$ 
a^1\wedge b^3+b^1\wedge a^3.
$$ 
A subgroup of graded automorphisms ${\rm Aut}_{gr}({\mathcal L}(2,4))\subset {\rm Aut}({\mathcal L}(2,4))$ is isomorphic to the group of non-degenerate lower triangular matrices $LT(2,{\mathbb R})$, 
whose action with respect to the basis (\ref{cohomology_basis})
for  $H^2_{(5)}(\tilde {\mathcal L}(2,4),{\mathbb K})$  is written as follows
\begin{equation}
\label{3Daction}
A=
\begin{pmatrix}
\alpha & 0\\
\rho & \mu
\end{pmatrix} \in LT(2,{\mathbb K}) \to \alpha^2\mu\cdot 
\begin{pmatrix}
\alpha^2 & 3\rho\alpha &\alpha\rho & 2\rho^2\\
0&\alpha\mu & 0 & \mu\rho\\
0&0 & \alpha\mu& \mu\rho\\
0&0&0& \mu^2
\end{pmatrix}, a^1 \to \alpha a^1+\rho b^1, b^1 \to \mu b^1. 
\end{equation}
\end{lemma}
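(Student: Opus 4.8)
The plan is to treat the two assertions separately: first the bigraded structure of $H^2(\tilde{\mathcal L}(2,4),\mathbb K)$, and then the graded automorphism group together with its induced representation on the weight-$5$ summand. For the cohomology I would exploit the weight grading of the cochain complex. By the compatibility of the second grading with $d$ (Section \ref{s3}), the complex $(\Lambda^*(\tilde{\mathcal L}(2,4)^*),d)$ splits as a direct sum over weights, so $H^2=\bigoplus_\lambda H^2_{(\lambda)}$ with each summand computed independently. Since the $1$-forms carry weights $1,1,2,3,3,4,4$, a $2$-monomial has weight between $2$ and $8$, so only finitely many weights must be checked. For each weight I would list the monomial $2$-forms, compute their differentials from the structure formulas via $d(\xi\wedge\eta)=d\xi\wedge\eta-\xi\wedge d\eta$, find $\ker d_2$, and factor out the coboundaries spanned by $da^2,da^3,db^3,da^4,db^4$. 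The expected outcome is that weights $2,3$ are exact, weights $6,7,8$ carry no nonzero cocycles (there the differentials of the monomials are linearly independent), while weight $4$ gives a one-dimensional quotient and weight $5$ a four-dimensional cocycle space with no coboundaries. Concretely, at weight $4$ the forms $a^1\wedge a^3$ and $b^1\wedge b^3$ are closed and $d(a^1\wedge b^3)=-d(b^1\wedge a^3)$, so the closed subspace is three-dimensional; modding out by $da^4=a^1\wedge a^3$ and $db^4=b^1\wedge b^3$ leaves the single class $a^1\wedge b^3+b^1\wedge a^3$. At weight $5$ there are no $1$-forms and hence no coboundaries, and imposing $d\omega=0$ on a general weight-$5$ $2$-form yields exactly two linear relations among its six coefficients; solving them produces the stated basis (\ref{cohomology_basis}).

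For the automorphism statement I would use that a naturally graded Lie algebra is generated by $\mathfrak g_1$, so a graded automorphism is determined by its restriction to $\mathfrak g_1=\langle a_1,b_1\rangle$ and must preserve every homogeneous component. Writing this restriction as a $2\times2$ matrix and propagating it through the brackets $a_2=[a_1,b_1]$, $a_3=[a_1,a_2]$, $b_3=[b_1,a_2]$, $a_4=[a_1,a_3]$, $b_4=[b_1,b_3]$ expresses $\varphi$ on the whole basis in terms of the matrix entries. The decisive step is then to impose that the \emph{vanishing} relations $[a_1,b_3]=[b_1,a_3]=0$ are respected: these force algebraic conditions on the entries, and it is precisely these conditions that cut the admissible maps down to the claimed group $LT(2,\mathbb K)$ and that pin down the normalising scalar appearing in (\ref{3Daction}).

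Once the group is identified, I would dualise to obtain $\varphi^*$ on $\tilde{\mathcal L}(2,4)^*$, extend it multiplicatively through $\varphi^*(\xi\wedge\eta)=\varphi^*(\xi)\wedge\varphi^*(\eta)$, apply it to each of the four basis cocycles of (\ref{cohomology_basis}), and collect coefficients (including any common factor) to read off the $4\times4$ matrix. I expect the cohomology half to be essentially bookkeeping once the weight splitting is invoked; the real obstacle is the automorphism half. The delicate points are solving the compatibility constraints coming from $[a_1,b_3]=[b_1,a_3]=0$ — where the exact shape of ${\rm Aut}_{gr}(\tilde{\mathcal L}(2,4))$ and the precise entries of the action matrix are determined — and carefully tracking signs and cross-terms when pulling $\varphi^*$ through the mixed cocycles $a^1\wedge b^4+a^2\wedge b^3$ and $b^1\wedge a^4-a^2\wedge a^3$.
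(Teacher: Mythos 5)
The cohomology half of your plan is correct and is essentially the paper's own argument: split the complex by weight, note there is nothing below weight $4$, exhibit the single weight-$4$ class $a^1\wedge b^3+b^1\wedge a^3$, and read off the four weight-$5$ classes from the differentials of the six weight-$5$ monomials (your explicit treatment of weights $6,7,8$, which the paper leaves implicit, and your count of two relations among six coefficients in weight $5$ are both right).

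The gap is in the automorphism half, at precisely the step you flag as decisive but do not carry out. Do impose the vanishing relations as you propose. A graded map dual to $a^1\mapsto\alpha a^1+\rho b^1$, $b^1\mapsto\mu b^1$ acts on the algebra by $\varphi(a_1)=\alpha a_1$, $\varphi(b_1)=\rho a_1+\mu b_1$, hence $\varphi(a_2)=\alpha\mu\,a_2$, $\varphi(b_3)=\alpha\mu(\rho a_3+\mu b_3)$, and
$$
[\varphi(a_1),\varphi(b_3)]=\alpha^2\mu\rho\, a_4 .
$$
Since $[a_1,b_3]=0$, this forces $\alpha\rho=0$, i.e.\ $\rho=0$; running the same computation with $[b_1,a_3]$ and a general $2\times 2$ matrix yields the pair of conditions $\alpha\rho=0$ and $\beta\mu=0$. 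Equivalently, on the dual side $\varphi^*(a^1\wedge a^3)=\alpha\mu\bigl(\alpha^2\,a^1\wedge a^3+\alpha\rho\,(a^1\wedge b^3+b^1\wedge a^3)+\rho^2\,b^1\wedge b^3\bigr)$ contains the non-exact weight-$4$ class unless $\alpha\rho=0$, so $\varphi^*$ cannot be extended to $a^4$. These conditions therefore do not cut the admissible maps down to $LT(2,{\mathbb K})$: they leave only the monomial matrices (the diagonal ones together with the swap $a_1\leftrightarrow b_1$), and the off-diagonal parameter $\rho$ in (\ref{3Daction}) is not actually available. So the assertion that the constraints ``cut the admissible maps down to the claimed group $LT(2,{\mathbb K})$'' is the step that fails; an honest execution of your own plan will not reproduce (\ref{3Daction}) with $\rho$ free. (For comparison, the paper's proof derives only the condition $\beta_1=0$ from extending the ${\mathcal L}(2,3)$-formulas to $a^4,b^4$ and omits the companion condition $\alpha_1\rho_1=0$; that omission is exactly where your computation and the stated lemma part ways, and it propagates into the subsequent orbit analysis.)
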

\begin{proof}
We compute the cohomology $H^2(\tilde {\mathcal L}(2,4),{\mathbb K})$ using the second (natural) grading. Obviously, there are no non-trivial cocycles in weights strictly less than $4$. The algebra $\tilde {\mathcal L}(2,4)$ will have only one non-trivial cocycle of weight $4$, this is $a^1\wedge b^3+b^1\wedge a^3$. 
The basis of cocycles of weight $5$ is found directly by writing down the action of the differential $d$ on the basis of a subspace of cochains of weight five
$$
a^1\wedge a^4, a^1\wedge b^4, b^1\wedge a^4,  b^1\wedge b^4, a^2\wedge a^3, a^2 \wedge b^3.
$$
Let us prove the formula (\ref{3Daction}). For the action of a graded automorphism $\varphi$ from ${\rm Aut}_{gr}({\mathcal L}(2,4))$ on generators $a^1, b^1$, $a^2$, $a^3, b^3$, $a^4, b^4$ of a cochain complex $\Lambda^*( \tilde {\mathcal L}(2,4))$ the formulas (\ref{automorphism_2_3}) are valid, where
$$
\alpha_2=\rho_2=\alpha_3=\rho_3=\beta_3=\mu_3=0.
$$
These formulas can be extended to the automorphism action on generators $a^4, b^4$ if and only if $\beta_1=0$. Redefining the automorphism parameters $\alpha_1 = \alpha, \mu_1 = \mu, \rho_1 = \rho $, and explicitly calculating its action on the basis (\ref {cohomology_basis}) from cocycles of weight $5$, we get the formulas (\ref{3Daction}). 
\end{proof}
Hence we have the group $GL_1\times LT(2,{\mathbb R})$ acting on the four dimensional space $H^2_{(5)}(\tilde {\mathcal L}(2,4),{\mathbb K})$ according to the formulas  (\ref{3Daction}).  Remove the one-point orbit of the zero cohomology class and go to the projectivization in homogeneous coordinates
 $(x_1:x_2:x_3:x_4)$.  

Consider the action
in the affine chart $x_4 \ne 0$, where we fixed the affine coordinates $x=\frac{x_1}{x_4},y=\frac{x_2}{x_4},z=\frac{x_3}{x_4}$. We introduce the parameters $a=\frac{\alpha}{\mu}, b=\frac{\rho}{\mu}$ for the action of the automorphism $\varphi$ on the space
$PH^2_{(5)}(\tilde {\mathcal L}(2,4),{\mathbb K})$. Rewrite the formulas (\ref {3Daction}) in affine coordinates $x, y, z$ of the chart $x_4 \ne 0$:
$$
\begin{pmatrix}
x\\
y\\
z
\end{pmatrix} \to
\begin{pmatrix}
a^2 & 3ab &ab \\
0&a & 0\\
0&0& a
\end{pmatrix} 
\begin{pmatrix}
x\\
y\\
z
\end{pmatrix} 
+
\begin{pmatrix}
2b^2\\
b\\
b
\end{pmatrix}, a, b \in {\mathbb K}, a\ne 0.
$$
Such a $LT(2,{\mathbb K})$-action  on the three-dimensional space ${\mathbb K}^3$ has an invariant plane
$y= z$. The orbit of the origin of $ (0,0,0) $ will be a parabola
 $\left\{ \begin{array}{c}x=2y^2,\\ y=z\end{array}\right.$. 
Further, the analysis of the orbit space of the $LT(2,{\mathbb K})$-action on the plane $y=z$ depends on which ground field ${\mathbb K} $ we consider (the situation here is completely similar to the case of the free nilpotent Lie algebra $ {\mathcal L} (2,3) $): two open orbits represented by points
$(\pm1,0,0)$ in the real case and one orbit for ${\mathbb K} = {\mathbb C}$, represented by the point $(1,0,0)$.

Consider an arbitrary point $P_0=(x_0,y_0,z_0), y_0 \ne z_0,$ in the affine space ${\mathbb K}^3$, not lying in the plane $y = z$. There is a unique automorphism
$\varphi \in  {\rm Aut}_{gr}(\tilde {\mathcal L}(2,3))$ sending $P_0$ to the point $
\varphi(P_0)$ lying on the line $x=t, y=0, z=1$. Coordinates of such a point
could be found explicitely
$$
\varphi(P_0)=\left(\frac{x_0-y_0^2-y_0z_0}{(z_0-y_0)^2} ,0,1\right).
$$
Thus, the orbit ${\mathcal O}(P_0) $ of an arbitrary point $P_0$ not lying in the plane $ y = z $ intersects the line $x=t, y=0, z=1$ at a single point $\varphi(P_0)$. Further, for clarity, we will consider only the real case ${\mathbb K}={\mathbb R}$.

\begin{figure}[h]
\center{\includegraphics[scale=0.38]{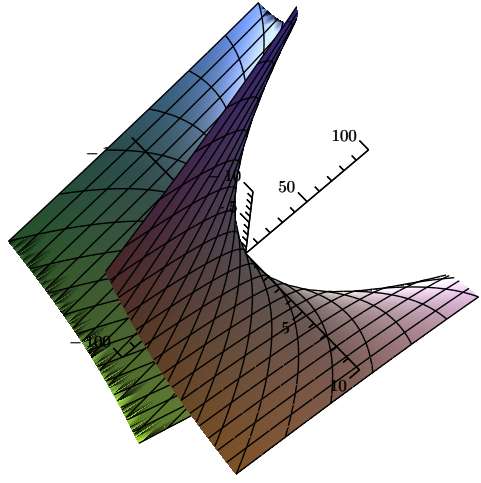}}
\caption{Hyperbolic paraboloid $t(z-y)^2+y^2+yz-x=0$ for $t< \frac{1}{8}$.}
\label{hyperbolic_hyperboloid}
\end{figure}

The orbit ${\mathcal O}(P_0)$ of an arbitrary point $P_0=(x_0, y_0, z_0), y_0 \ne z_0, $ is given by the parametric equation in $ {\mathbb R}^3$
$$
r(a,b)=(a^2x_0+3aby_0+abz_0+2b^2,ay_0+b,az_0+b), a, b \in {\mathbb R}, a \ne 0.
$$
This is a second order surface equation $\Gamma_{x_0,y_0,z_0}$ in the space ${\mathbb R}^3 $. If you enter the parameter $t=\frac{(x_0-y_0 ^ 2-y_0z_0)}{(z_0-y_0)^2}$, then the same surface $\Gamma_{x_0, y_0, z_0}$ can be set even more simple implicit second order equation $F_t (x, y, z) = 0 $, depending on the real parameter $t$
\begin{equation}
\label{paraboloids}
F_t(x,y,z)=t(z-y)^2+y^2+yz-x=0.
\end{equation}
What about the structure of this one-parameter family of second-order surfaces? Each such surface $F_t (x, y, z) =0$ intersects
plane $y = z$ at parabola  $\left\{ \begin{array}{c}x=2y^2,\\ y=z\end{array}\right.$.
This parabola should be removed from the surface $F_t(x, y, z) = 0$ to get the  orbit ${\mathcal O} _t $ of the point $(t,0,1)$.

It is easy to see that for $t_0=\frac{1}{8}$ the surface $F_{t_0}(x,y,z)=0$ will be a parabolic cylinder. In all other cases, $F_{t}(x,y,z)=0$ will be a paraboloid: for $t <\frac{1}{8} $ it is hyperbolic paraboloid, and for $ t> \frac{1}{8}$ it is elliptic one. Fixing the value of $ R_0> 0 $, we see that the intersection points of the ball $\left\{(x,y,z), x^2+y^2+z^2 \le R_0^2\right\}$ s with the elliptic paraboloid $F_t(x,y,z)=0$, tend to $t\to+\infty$ to the inner part of the parabola $ \left \{\begin {array}{c} x = 2y^2, \\ y = z \end{array} \right.$ on the plane $y = z$. Accordingly, as $ t \to - \infty $, the points $(x,y,z), x^2+y^2+z^2 \le R_0^2$, belonging to the hyperbolic paraboloid $F_t(x,y,z)=0$, tend to the outer part of the parabola $\left\{ \begin{array}{c}x=2y^2,\\ y=z\end{array}\right.$ of the plane $ y = z $. Moreover, each paraboloid twofold covers the corresponding region of the plane $y = z$.

\begin{figure}[h]
\center{\includegraphics[scale=0.38]{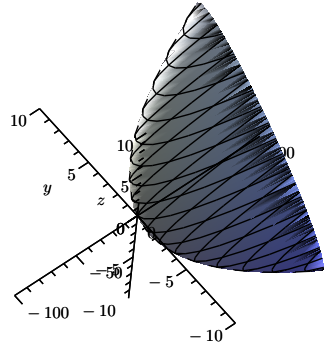}}
\caption{Elliptic paraboloid $t(z-y)^2+y^2+yz-x=0$ for $t> \frac{1}{8}$.}
\label{elliptic_hyperboloid}
\end{figure}

Thus, all our space ${\mathbb R}^3$, except for the plane $y = z$, is fibered into paraboloids, and through any neighborhood $U$ of an arbitrary point $P$ of the space ${\mathbb R}^3$ passes an infinite set of paraboloids (orbits of action).

It remains to investigate the orbits on an invariant with respect to the action of the ${\rm Aut}_{gr}(\tilde {\mathcal L}(2,3))$ group of the $x_4=0$ plane of the projective space
${\mathbb P}H^2_{(5)}(\tilde {\mathcal L}(2,3),{\mathbb  R})$.
The action of the element$\varphi \in {\rm Aut}_{gr}({\mathcal L}(2,3))$, restricted to this plane, is written as
$$
\varphi(x_1:x_2:x_3:0)=(ax_1+3bx_2+bx_3:x_2:x_3:0), 
$$
where $a=\frac{\alpha}{\mu}, b=\frac{\rho}{\mu}$.

It is easy to see that its orbit space will consist of:

1) single-point orbit ${\mathcal O} = \{(1: 0: 0: 0) \} $ (the common asymtotic direction of all elliptic paraboloids from the family (\ref{paraboloids}));

2) projective lines of the form $Ax_2 + Bx_3 = 0$ 
(asymptotic planes of hyperbolic paraboloids from the family (\ref{paraboloids})).

These orbits can be set by their representatives
$$
(1:0:0:0), (0:1:0:0), \{(0:\tau:1:0), \tau \in {\mathbb R}\}.
$$
Representatives of the previously found orbits from the $x_4 \ne 0$ map should be added to them, respectively:
$$
(0: 0: 0: 1), (\pm1: 0: 1), \{(t: 0: 1: 1), t \in {\mathbb R} \}.
$$
\begin{corollary}
There are two one-parameter families ${\mathcal L}_t$ and $\tilde{\mathcal L}_ {\tau} $ of  real pairwise non-isomorphic naturally graded Lie algebras of width two:
$$
\begin{array}{c}
{\mathcal L}_t=\langle  a_1, b_1\rangle \oplus \langle  a_2 \rangle
\oplus \langle  a_3, b_3\rangle \oplus \langle a_4, b_4\rangle \oplus \langle a_5\rangle,\\

[a_1,b_1]=a_2, [a_1,a_2]=a_3, [b_1,a_2]=b_3, [a_1,a_3]=a_4,\\

 [b_1,b_3]=b_4,
[a_1,b_3]=[b_1,a_3]=0,\\

[a_1,a_4]= ta_5, [a_1,b_4]=0, [b_1,a_4]=a_5, \\

[a_2,a_3]=-a_5, [b_1,b_4]=a_5, [a_2,b_3]=-a_5,
\end{array}
$$
and also
$$
\begin{array}{c}
\tilde {\mathcal L}_{\tau}=\langle \tilde a_1,\tilde b_1\rangle \oplus \langle \tilde a_2 \rangle
\oplus \langle \tilde a_3,\tilde b_3\rangle \oplus \langle \tilde a_4, \tilde b_4\rangle \oplus \langle \tilde a_5\rangle,\\

[ \tilde a_1, \tilde b_1]= \tilde a_2, [ \tilde a_1, \tilde a_2]= \tilde a_3, [\tilde b_1, \tilde a_2]= \tilde b_3, \\

[\tilde a_1, \tilde a_3]= \tilde a_4, [ \tilde b_1, \tilde b_3]= \tilde b_4,
[ \tilde a_1, \tilde b_3]=[ \tilde b_1, \tilde a_3]=0,\\

[ \tilde a_1, \tilde a_4]= 0, [ \tilde a_1, \tilde b_4]=\tau \tilde a_5, [ \tilde b_1, \tilde a_4]= \tau \tilde a_5, [\tilde b_1, \tilde a_4]=a_5, \\

[ \tilde a_2, \tilde a_3]=- \tilde a_5, [ \tilde b_1, \tilde b_4]= 0, [ \tilde a_2, \tilde b_3]=0.
\end{array}
$$
\end{corollary}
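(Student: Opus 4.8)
The plan is to read the two families directly off the orbit decomposition of $\mathbb{P}H^2_{(5)}(\tilde{\mathcal L}(2,4),\mathbb{R})$ established in the preceding analysis, and then to convert ``distinct orbits'' into ``pairwise non-isomorphic Lie algebras'' by invoking the graded classification theorems. First I would make explicit the dictionary between orbit representatives and one-dimensional central extensions. A nonzero class $\tilde c\in H^2_{(5)}(\tilde{\mathcal L}(2,4),\mathbb{R})$, written in the basis (\ref{cohomology_basis}) as $x_1(a^1\wedge a^4)+x_2(a^1\wedge b^4+a^2\wedge b^3)+x_3(b^1\wedge a^4-a^2\wedge a^3)+x_4(b^1\wedge b^4)$, defines the extension obtained by adjoining a generator $a_5$ of weight $5$ with $da^5=\tilde c$; reading off the structure constants from this equation reproduces, up to the coboundary freedom $\tilde c\mapsto\tilde c+d\mu$ (harmless by Proposition \ref{ext_cohom}), the bracket relations listed in the statement. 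For the affine-chart representatives $(t{:}0{:}1{:}1)$ the $x_1$-coefficient is $t$, so the relation $[a_1,a_4]=t\,a_5$ appears, yielding the family $\mathcal{L}_t$; for the representatives at infinity $(0{:}\tau{:}1{:}0)$ the $x_2$-coefficient is $\tau$, producing $[a_1,b_4]=\tau\,a_5$ and hence the family $\tilde{\mathcal L}_\tau$.

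Next I would apply the graded analogues of Theorems \ref{main1} and \ref{main2}. Each weight-$5$ cocycle is nonzero and so linearly independent modulo lower weight, whence by the graded analogue of Theorem \ref{main1} every such extension is naturally graded of nil-index $5$ with one-dimensional top component; since the homogeneous dimensions of $\tilde{\mathcal L}(2,4)$ are $2,1,2,2$, each extension has dimensions $2,1,2,2,1$ and is therefore an $8$-dimensional naturally graded Lie algebra of width two. By the graded analogue of Theorem \ref{main2}, two such one-dimensional extensions are isomorphic if and only if the corresponding points of $\mathbb{P}H^2_{(5)}$ lie in one orbit of the $GL_1\times{\rm Aut}_{gr}(\tilde{\mathcal L}(2,4))$-action. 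The orbit analysis shows that the paraboloids $\mathcal{O}_t$ through $(t{:}0{:}1{:}1)$ are pairwise distinct for distinct $t$, since each meets the transversal line $x=t,\,y=0,\,z=1$ in the single point $\varphi(P_0)$; likewise the projective-line orbits through $(0{:}\tau{:}1{:}0)$ are pairwise distinct for distinct $\tau$. Moreover the two families live in the complementary invariant sets $x_4\neq 0$ and $x_4=0$, so no $\mathcal{L}_t$ is isomorphic to any $\tilde{\mathcal L}_\tau$, and both families are genuinely one-parameter because $\mathbb{R}$ is a continuum.

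The hard part will be justifying the passage from the abstract isomorphism problem to the graded orbit problem, that is, the legitimacy of using ${\rm Aut}_{gr}$ rather than the full group ${\rm Aut}$. Two of these $8$-dimensional algebras are a priori only isomorphic as abstract Lie algebras, and one must argue that any such isomorphism respects the natural grading (or can be replaced by one), so that it descends to a graded automorphism of the common quotient $\tilde{\mathcal L}(2,4)$ acting on the weight-$5$ cocycles. This is precisely the content of the graded analogue of Theorem \ref{main2}, and it is underpinned by the Remark following Theorem \ref{main1}: because the weight-$5$ cocycles have maximal filtration, the top ideal $\tilde{\mathfrak g}^5$ is canonically the last nonzero term of the lower central series, every isomorphism carries $(\tilde{\mathfrak g}')^5$ onto $\tilde{\mathfrak g}^5$, and the collisions illustrated in Example \ref{collision} are thereby excluded. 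The remaining steps are the bookkeeping already performed above, so once this reduction is in hand the corollary follows immediately from the explicit orbit list.
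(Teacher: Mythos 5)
Your proposal is correct and follows essentially the same route as the paper: the corollary is obtained there exactly as you describe, by reading the two families off the orbit representatives $(t{:}0{:}1{:}1)$ and $(0{:}\tau{:}1{:}0)$ of the $GL_1\times{\rm Aut}_{gr}(\tilde{\mathcal L}(2,4))$-action on ${\mathbb P}H^2_{(5)}(\tilde{\mathcal L}(2,4),{\mathbb R})$ computed in Section \ref{s7}, and then invoking the graded versions of Theorems \ref{main1} and \ref{main2} from Section \ref{s6} to translate distinct orbits into pairwise non-isomorphic naturally graded extensions. Your explicit attention to why ${\rm Aut}_{gr}$ suffices in place of the full automorphism group matches the role the paper assigns to the graded classification theorem quoted from \cite{Mill5}.
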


%\end{fulltext}

\end{document}